        \theoremstyle{plain}
        \newtheorem*{theorem*}{Theorem}
        \newtheorem{theorem}{Theorem}[section]
        \newtheorem{lemma}[theorem]{Lemma}
        \theoremstyle{definition}
        \newtheorem{example}[theorem]{Example}
        \newtheorem*{example*}{Example}
        \theoremstyle{remark}
        \newtheorem{remark}[theorem]{Remark}
        \newtheorem*{remark*}{Remark}
\newcommand{\PP}{\mathbb{P}}
\newcommand{\RR}{\mathbb{R}}
\begin{document}
\title{Minimum cross-entropy distributions on Wasserstein balls and their applications.}
\author{Luis Felipe Vargas}
\address{Luis Felipe Vargas, Centrum Wiskunde \& Informatica (CWI)\\  Science Park 123\\ 1098 XG Amsterdam\\ The Netherlands.}
\email{luis.vargas@cwi.nl}
\author{Mauricio Velasco}
\address{Mauricio Velasco, Departamento de
  Matem\'aticas\\ Universidad de los Andes\\ Carrera 1 No. 18a 10\\ Edificio
  H\\ Primer Piso\\ 111711 Bogot\'a\\ Colombia}
\email{mvelasco@uniandes.edu.co}

\keywords{Wasserstein distance, Minimum cross-entropy principle, Weighted Voronoi diagram, machine learning with priors}
\subjclass[2010]{90C25, 90C34, 62G07}

\begin{abstract}
Given a prior probability density $p$ on a compact set $K$ we characterize the probability distribution $q_{\delta}^*$ on $K$ contained in a Wasserstein ball $B_{\delta}(\mu)$ centered in a given discrete measure $\mu$ for which the relative-entropy $H(q,p)$ achieves its minimum. This characterization gives us an algorithm for computing such distributions efficiently.
\end{abstract}

\maketitle

\section{Introduction}

Kullback's minimum cross-entropy principle is one of the basic mechanisms available for statistical inference. This principle states that among all probability distributions $q$ satisfying a given collection of moment inequalities the ``best'' approximation to a given prior distribution $p$ is the unique $q$ for which the relative entropy $H(q,p)$ achieves its minimum. In many contexts it is known that solving these optimization problems leads to the {\it unique} self-consistent inference mechanism (see Section~\ref{Sec: Stats} for details) which makes it a rather natural approach.

In this article we apply this principle to the basic problem of machine learning. More precisely we assume that we are given an i.i.d. sample $X_1,\dots, X_N$ of a random variable taking values in a set $K\subseteq \RR^n$ and a prior distribution $p$, which represents our beliefs about the distribution of the random variable which generated the data. Our main objective is to learn the distribution of the data from the samples while simultaneously incorporating the information contained in our prior $p$. 

If the number $N$ of samples is sufficiently large then statistical learning theory guarantees that the samples alone suffice to obtain a good approximation of the underlying distribution eliminating the need for a prior distribution $p$. In contrast, in this article we will be interested in situations where the sample size $N$ is assumed to be not too large. In this regime the following two basic questions become fundamental:
\begin{enumerate}
\item How to incorporate the prior information $p$ into our learning mechanism?
\item How to make our inference mechanism more robust to prevent the  possibility of overfitting inherent in the small sample size? 
\end{enumerate}

As we will show, both of these questions have a natural common answer, which will depend on an auxiliary positive real number $\delta$. As the parameter $\delta$ changes the distribution $q^*(\delta)$ learned by our algorithm will change. It will coincide with the empirical distribution of the data when $\delta=0$ and will coincide with the prior distribution $p$ when $\delta=\infty$.  For other values of $\delta$ our inference procedure will interpolate between these two extreme cases, incorporating information from both the prior and the data sample in a consistent manner. The output of our algorithm will be the curve of probability densities $q^*(\delta)$. The parameter $\delta$ is free, to be selected by the user using additional external information. This extra degree of freedom will be very useful for the application we consider in Section~\ref{Sec: Applications}.

More concisely, we aim to solve the problems
\[
q^*(\delta):={\rm argmin}\left\{ H(q,p): q\in B_{\delta}(\mu).\right\}
\]
where $B_{\delta}(\mu)$ is an ambiguity set, typically a ball of radius $\delta$, around the empirical measure $\mu:=\frac{1}{N}\sum_{i=1}^N \delta_{X_i}$ defined by the sample. For the problem to be completely specified we need to select the metric used to define this ambiguity ball. For the chosen metric we should have:
\begin{enumerate}
\item  Quantitative consistency bounds guaranteeing that, with high probability, the true distribution of the $X_i$ lies in $B_{\delta}(\mu)$ for a known radius $\delta=\delta(N)$ which decreases to zero as the number of samples goes to infinity.
\item  An effectively computable minimization problem. Note that this is a nontrivial requirement since the space of distributions is infinite-dimensional unless $K$ is a finite set.
\end{enumerate}
A considerable amount of recent work (see for instance~\cite{quantization},\cite{EsfahaniKuhn}, \cite{Kuhnetal}) has shown that the Wasserstein (or earth-mover's) distance satisfies both of these requirements (see Section~\ref{Sec: Stats}).

To give a more precise description of the Wasserstein metric and of the proposed algorithm we need to introduce some terminology. Assume $K\subseteq \RR^n$ is a regular compact set (i.e. the closure of an open bounded subset of $\RR^n$) with a fixed metric $d$. For probability distributions $\mu,\nu$ on $K$ let $\Pi(\mu,\nu)$ be the set of probability distributions in $K\times K$ whose marginals coincide with $\mu$ and $\nu$ respectively. We can think of $\pi\in \Pi(\mu,\nu)$ as a transportation plan between $\mu$ and $\nu$ and define the Wasserstein distance as the minimum amount of work needed to transform $\mu$ to $\nu$, that is as the number
\[\mathcal{W}(\nu, \eta) : =\inf_{\pi\in \Pi(\mu,\nu)} \int_{K\times K} d(x,y)d\pi(x,y).\]

With these preliminaries we can formally state our approach. Given a real number $\delta>0$, an independent sample $x_1,\dots, x_N$ of a random variable taking values in $K$ and a prior distribution $p$ which is strictly positive on $K$, our proposed learning strategy consists of finding a minimizer $q^*(\delta)$ of the cross-entropy $H(q,p)$  
\begin{equation}
\label{BasicProblem}
q^*(\delta):={\rm argmin}\left\{ H(q,p): q\in B_{\delta}(\mu).\right\}
\end{equation}
where $B_{\delta}(\mu)$ is the set of probability distributions $\nu$ with $\mathcal{W}(\nu,\mu)\leq \delta$. Our first result shows that membership in such Wasserstein balls can be recast as a collection of moment inequalities. This fact justifies cross-entropy minimization as a (in fact the only) self-consistent inference procedure in this context (see Section~\ref{Sec: Stats} for details). To state it, given $\lambda\in \RR^N$ define the function $\phi_{\lambda}(x):=\min_{1\leq i\leq N}\{d(x,x_i)-\lambda_i\}$. 

\begin{theorem}\label{thm: justification} If $\mu=\frac{1}{N}\sum_{i=1}^N \delta_{x_i}$ then a probability density $q(x)\in L^1(dx)$ belongs to $B_{\delta}(\mu)$ if and only if $q(x)$ satisfies the moment constraints
\[\int_K\phi_{\lambda}(x)q(x)dx \leq \delta\text{ for all $\lambda\in \RR^n$ with $\sum_{j=1}^N \lambda_j=0$}.\]
As a result, solving problem~(\ref{BasicProblem}) is the only self-consistent inference method for choosing a posterior density $q\in B_{\delta}(\mu)$ for the given prior $p$.
\end{theorem}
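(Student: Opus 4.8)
The plan is to derive the stated characterization from Kantorovich duality, taking full advantage of the fact that the target measure $\mu$ is finitely supported. First I would recall the dual representation of the Wasserstein distance: since $d$ is continuous and $K$ is compact, strong duality holds and
\[
\mathcal{W}(q,\mu)=\sup\left\{\int_K \psi\, dq+\int_K \varphi\, d\mu : \psi(x)+\varphi(y)\le d(x,y)\ \ \forall\, x,y\in K\right\}.
\]
Because $\mu=\frac1N\sum_i\delta_{x_i}$ is supported on $\{x_1,\dots,x_N\}$, the term $\int_K\varphi\,d\mu=\frac1N\sum_i\varphi(x_i)$ depends on $\varphi$ only through the vector $\lambda:=(\varphi(x_1),\dots,\varphi(x_N))\in\RR^N$, and I would reduce the whole dual to the family $\psi=\phi_\lambda$.

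The reduction has two halves. For the upper bound, admissibility forces $\psi(x)\le d(x,x_i)-\lambda_i$ for every $i$, hence $\psi\le \phi_\lambda$ pointwise; since $q\ge 0$ the objective is bounded above by $\int_K\phi_\lambda\,dq+\frac1N\sum_i\lambda_i$. For the matching lower bound, the pair $(\phi_\lambda,\phi_\lambda^{c})$ consisting of $\phi_\lambda$ and its $c$-transform is admissible, and one checks directly from $\phi_\lambda(x)\le d(x,x_i)-\lambda_i$ that $\phi_\lambda^{c}(x_i)\ge\lambda_i$, so this pair already attains at least $\int_K\phi_\lambda\,dq+\frac1N\sum_i\lambda_i$. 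Together these give the semi-discrete dual formula
\[
\mathcal{W}(q,\mu)=\sup_{\lambda\in\RR^N}\left\{\int_K \phi_\lambda(x)\, q(x)\, dx+\frac1N\sum_{i=1}^N\lambda_i\right\}.
\]

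Next I would exploit a gauge invariance to eliminate the linear term. For any constant $c$ one has $\phi_{\lambda+c\mathbf 1}=\phi_\lambda-c$, so shifting every coordinate of $\lambda$ by $c$ leaves the bracketed objective unchanged; normalizing by $\sum_{j}\lambda_j=0$ therefore loses no generality and kills the term $\frac1N\sum_i\lambda_i$, yielding $\mathcal{W}(q,\mu)=\sup\{\int_K\phi_\lambda\,q\,dx:\ \sum_{j=1}^N\lambda_j=0\}$. The claimed equivalence is now immediate: $\mathcal{W}(q,\mu)\le\delta$ holds if and only if every $\lambda$ with $\sum_j\lambda_j=0$ satisfies $\int_K\phi_\lambda\,q\,dx\le\delta$, which is exactly the asserted family of moment inequalities. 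The closing statement on self-consistency then follows by combining this characterization---membership in $B_\delta(\mu)$ is a system of \emph{linear} (moment) inequality constraints on $q$---with the axiomatic justification of the minimum cross-entropy principle recalled in Section~\ref{Sec: Stats}.

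I expect the main obstacle to be the rigorous passage to $c$-concave potentials of the form $\phi_\lambda$, i.e.\ establishing the semi-discrete dual formula with no duality gap. Compactness of $K$ and continuity of $d$ are what guarantee both that strong duality holds and that the optimal $\psi$ may be taken to be the $c$-transform $\varphi^{c}$, which for a finitely supported $\varphi$ is precisely $\phi_\lambda$; one must also verify that $\phi_\lambda$ is bounded and continuous, hence integrable against any $q\in L^1(dx)$, so that the objective is well defined. By contrast, the gauge-invariance step and the final logical equivalence are routine.
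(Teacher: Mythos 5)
Your proposal is correct and takes essentially the same approach as the paper: the paper's proof of this theorem simply invokes Lemma~\ref{lem: supergradient} part (1) together with the Shore--Johnson theorem, and the proof of that lemma is exactly your argument---Kantorovich duality, reduction of the potential on the $\mu$-side to its values $\lambda_i$ at the atoms, replacement of the other potential by $\phi_\lambda$, and the shift normalization $\sum_i \lambda_i = 0$ that eliminates the term $\frac{1}{N}\sum_i\lambda_i$. Your lower-bound step via the $c$-transform admissible pair is spelled out somewhat more carefully than the paper's one-line assertion that $\phi_\lambda$ is the best (largest) choice of potential, but it is the same reduction.
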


Problem~(\ref{BasicProblem}) is infinite-dimensional and thus does not lend itself to computation immediately. Extending the seminal work by Carlsson, Behroozi and Mihic~\cite{Carlssonetal} we show that it is nevertheless possible to reformulate it so as to make it concave and finite-dimensional. More precisely we prove the following characterization of optimal solutions

\begin{theorem} \label{thm: main} The unique density of minimum cross-entropy in $B_{\delta}(\mu)$ is given by the formula
\[q^*(x):=p(x)\exp\left[-1-\overline{v}\phi_{\lambda^*}(x) -\overline{u}\right].\] 
for a unique $\lambda^*\in \RR^N$ and $(\overline{u},\overline{v})\in \RR^2$. Moreover, given $\lambda^*$ the pair $(\overline{u},\overline{v})$ can be characterized as the unique maximizer of the strictly concave two-dimensional maximization problem
\[\max_{(u,v)\in \RR^2, v\geq 0}\left(-u-v\delta-\int_Kp(x)e^{-\left(1+v\phi_{\lambda^*}(x)+u\right)}dx\right).\]
\end{theorem}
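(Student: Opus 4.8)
The plan is to solve the infinite-dimensional convex program~(\ref{BasicProblem}) by Lagrangian duality, replacing the Wasserstein constraint with the moment inequalities furnished by Theorem~\ref{thm: justification}. I would first note that $B_{\delta}(\mu)$ is convex and, because $\delta>0$ and $p$ is strictly positive on $K$, satisfies a Slater condition: mollifying $\mu$ by a narrow bump yields a density $q_{0}$ with $\mathcal{W}(q_{0},\mu)<\delta$ and $H(q_{0},p)<\infty$, so a strictly interior feasible point exists. Since $q\mapsto H(q,p)$ is strictly convex and lower semicontinuous, this gives existence and uniqueness of the primal minimizer and, crucially, strong duality, so it suffices to solve the dual.

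Next I would set up the Lagrangian. Writing the constraint in the worst-case form $F(q):=\max_{\lambda:\sum_{j}\lambda_{j}=0}\int_{K}\phi_{\lambda}q\,dx\le\delta$, a single multiplier $v\ge 0$ suffices, and together with the multiplier $u$ for $\int_{K}q\,dx=1$ the Lagrangian is
\[
L(q,u,v,\lambda)=H(q,p)+v\Big(\int_{K}\phi_{\lambda}q\,dx-\delta\Big)+u\Big(\int_{K}q\,dx-1\Big).
\]
The essential structural fact is that $L$ is strictly convex in $q$ (the bracketed terms being linear in $q$) and, since $\phi_{\lambda}$ is a minimum of functions affine in $\lambda$, concave in $\lambda$ for every fixed $v\ge 0$. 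This is the hypothesis of Sion's minimax theorem, which I would use to exchange $\inf_{q}$ and $\max_{\lambda}$.

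With the order exchanged, the inner problem is to minimize $L$ over densities $q\ge 0$ for fixed $(u,v,\lambda)$. Because $L$ is an integral functional with strictly convex integrand, this minimization is pointwise, and the Euler--Lagrange condition $\log(q/p)+1+v\phi_{\lambda}+u=0$ gives exactly
\[
q(x)=p(x)\exp\big(-1-v\phi_{\lambda}(x)-u\big),
\]
the claimed form. Substituting back, the terms $v\int_{K}\phi_{\lambda}q\,dx$ and $u\int_{K}q\,dx$ cancel and the dual function becomes $g(u,v,\lambda)=-u-v\delta-\int_{K}p(x)e^{-(1+v\phi_{\lambda}(x)+u)}\,dx$; note that $\partial_{u}g=0$ recovers the normalization $\int_{K}q\,dx=1$ automatically.

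Finally I would analyze $g$. For fixed $\lambda$ the exponent $1+v\phi_{\lambda}(x)+u$ is affine in $(u,v)$, so $g$ is concave there, and its Hessian equals minus the Gram matrix of $\{1,\phi_{\lambda}\}$ in $L^{2}(q\,dx)$; this matrix is positive definite precisely when $\phi_{\lambda}$ is non-constant, giving the asserted strict concavity of the two-dimensional problem. Coercivity of $g$ as $\lVert(u,v)\rVert\to\infty$ (the linear terms $-u,-v\delta$ dominate in the relevant directions) then yields a unique maximizer $(\overline{u},\overline{v})$, and the density assembled from an optimal triple $(\lambda^{*},\overline{u},\overline{v})$ is feasible and meets the dual value, hence is the unique primal optimum by strong duality. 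I expect the main obstacle to be rigorously justifying the minimax exchange and the attainment of $\lambda^{*}$: the index set $\{\lambda:\sum_{j}\lambda_{j}=0\}$ is non-compact, so before invoking Sion's theorem one must confine the relevant $\lambda$ to a compact set, using that $\phi_{\lambda}$ depends on $\lambda$ only through bounded differences dictated by the $1$-Lipschitz Kantorovich structure. Uniqueness of $\lambda^{*}$ is the other delicate point, which I would extract from the strict convexity of $H$ together with the weighted Voronoi partition that $\lambda^{*}$ and $q^{*}$ jointly determine.
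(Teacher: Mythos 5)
Your route is genuinely different from the paper's, and the overall architecture is viable. The paper does \emph{not} dualize the single scalar constraint $\mathcal{W}(q,\mu)\le\delta$; instead it treats the infinite family of moment constraints $\int_K\phi_{\lambda}q\,dx\le\delta$, $\lambda\in\Lambda$, as a cone constraint in the space of continuous functions on $\Lambda$ and applies Luenberger's vector-valued Lagrange duality \cite{Luenberger}. The dual multiplier is then a Borel \emph{measure} $z^*$ on $\Lambda$, and the key step is complementary slackness $\int_{\Lambda}G(q^*)(\lambda)\,dz^*(\lambda)=0$ combined with Lemma~\ref{lem: supergradient} part $(4)$: since the primal optimizer $q^*$ is a.e.\ positive, $\Psi_{q^*}$ has a unique maximizer, so $z^*$ collapses to a point mass $\overline{v}\delta_{\lambda^*}$. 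That collapse is what produces the single $\lambda^*$ and the scalar $\overline{v}$. You instead keep a scalar multiplier from the start and push the supremum over $\lambda$ outside the infimum via Sion's minimax theorem; this buys you a more elementary duality framework (no measure-valued multipliers, no Riesz representation) at the price of having to justify the minimax exchange over the non-compact set $\Lambda$. Your proposed fix --- restricting to Kantorovich potentials with $|\lambda_i-\lambda_j|\le d(x_i,x_j)$, which together with $\sum_i\lambda_i=0$ gives a compact set, and noting that this restriction loses nothing for any $q\ge 0$ by homogeneity of $\Psi_q$ in $q$ --- is the right one and can be made rigorous.

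Two gaps remain. First, existence of the primal minimizer does not follow from strict convexity and lower semicontinuity alone: in the infinite-dimensional setting one needs (weak) compactness of the feasible set or of the sublevel sets of $H(\cdot,p)$. The paper closes this by citing Csisz\'ar \cite{Csiszar}*{Theorem 2.1} (and Remark 2.14 there for the a.e.\ positivity of $q^*$, which your argument will also need); you would have to invoke the same result or a Dunford--Pettis uniform-integrability argument. Second, the uniqueness of $\lambda^*$ and the primal feasibility of the assembled density are asserted but not proved: the stationarity of your dual function in $\lambda$ says, via the gradient formula of Lemma~\ref{lem: supergradient} part $(2)$, that each weighted Voronoi cell $R_i(\lambda^*)$ carries mass $1/N$ under $q^*$, hence that $\lambda^*$ maximizes $\Psi_{q^*}$; only then do you know $\mathcal{W}(q^*,\mu)=\int_K\phi_{\lambda^*}q^*\,dx\le\delta$ (feasibility requires control of $\sup_{\lambda}\Psi_{q^*}$, not just the value at $\lambda^*$), and only then does Lemma~\ref{lem: supergradient} part $(4)$, applied to the strictly positive density $q^*$, give uniqueness of $\lambda^*$. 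You gesture at both ingredients (``the 1-Lipschitz Kantorovich structure,'' ``the weighted Voronoi partition'') but the chain of implications must be written out; it is exactly this chain, run through complementary slackness, that constitutes the heart of the paper's proof.
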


From Theorem~\ref{thm: main} it follows that if we knew the ``magical" value $\lambda^*\in \RR^N$ then we could easily find the desired minimum cross-entropy solution $q^*(\delta)$.

Section~\ref{sec: cutting-plane} is therefore devoted to the problem of finding $\lambda^*$: we characterize $\lambda^*$ as the unique maximizer of a quasi-concave function $\Gamma(\lambda)$ and provide a cutting plane algorithm allowing us to approximate $\lambda^*$ to any desired accuracy. To give a precise description of this characterization we need to introduce some notation. Let $\Lambda:=\{\lambda\in \RR^N: \sum \lambda_i=0\}$ and  
for $\lambda\in \Lambda$ define
\[\Gamma(\lambda) := \min\{H(q,p): q\in \Omega(\lambda)\}\]
where $\Omega(\lambda)$ is the set of probability densities $q$ on $K$ which satisfy the inequality $\int_K \phi_{\lambda}(x) q(x)dx\leq \delta$.
For $\lambda\in \Lambda$ and $i=1,\dots, N$ define the weighted Voronoi region around the point $x_i$ as  
\[R_i(\lambda):=\{x\in K: \forall j\left(d(x,x_i)-\lambda_i\leq  d(x,x_j)-\lambda_j\right)\}.\]
With these notational preliminaries we can characterize $\lambda^*$:

\begin{theorem}\label{thm: main2} The following statements hold:
\begin{enumerate}
\item For any $\lambda\in \Lambda$ the strong Lagrange dual of $\Gamma(\lambda)$ is equivalent to the concave two-dimensional maximization problem

\[\max_{(u,v)\in \RR^2, v\geq 0}\left(-u-v\delta-\int_Kp(x)e^{-\left(1+v\phi_{\lambda}(x)+u\right)}dx\right).
\]

\item If $(\overline{u},\overline{v})$ are maximizers of the problem in part $(1)$ and 
\[\overline{q}_{\lambda}(x):=p(x)\exp\left[-1-\overline{v}\phi_{\lambda}(x) -\overline{u}\right]\] 
then the vector $g\in \RR^N$ given by $g_i:=\frac{1}{N}-\int_{R_i(\lambda)} \overline{q}_{\lambda}(x)dx$ defines a halfspace 
\[\{\lambda'\in \Lambda: \langle g,\lambda'-\lambda\rangle\geq 0\}.\] 
which contains the maximizers of $\Gamma$.
\item The function $\Gamma(\lambda)$ is quasi-concave and has the value $\lambda^*$ from Theorem~\ref{thm: main} as its unique maximizer. 

\end{enumerate}
\end{theorem}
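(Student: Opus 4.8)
My plan is to take the three parts in order, since part~(2) depends on the dual computed in part~(1), and the cutting-plane claim of part~(2) is essentially the supergradient inequality for the quasi-concave function produced in part~(3).

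For part~(1) the plan is to recognize the inner problem defining $\Gamma(\lambda)$ as a convex program: minimize the strictly convex functional $H(\cdot,p)$ subject to the linear normalization $\int_K q\,dx = 1$ and the single linear inequality $\int_K \phi_\lambda q\,dx \le \delta$, then compute its Lagrangian dual. Introducing a multiplier $u\in\RR$ for the equality and $v\ge 0$ for the inequality, I would form $L(q,u,v) = H(q,p) + u(\int q - 1) + v(\int \phi_\lambda q - \delta)$ and minimize pointwise over $q\ge 0$ via the first-order condition $\log(q/p) + 1 + u + v\phi_\lambda = 0$, giving $q = p\exp(-1 - u - v\phi_\lambda)$. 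Substituting back collapses the Lagrangian to exactly the claimed dual objective $-u - v\delta - \int_K p\,e^{-(1 + v\phi_\lambda + u)}\,dx$, whose concavity is immediate since $-u-v\delta$ is affine and each integrand $e^{-(1+v\phi_\lambda(x)+u)}$ is convex in $(u,v)$. The one genuine step is verifying zero duality gap: I would invoke a Slater condition, exhibiting a density with $\int_K\phi_\lambda q\,dx < \delta$, together with convexity of the primal, to conclude that the dual value equals $\Gamma(\lambda)$ and that $\overline{q}_\lambda$ is the minimizer.

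For part~(2) the goal is to show that $g$ defines a supporting halfspace at $\lambda$ for the superlevel sets of $\Gamma$; equivalently, that $g$ is, up to a nonnegative scalar, the projection onto $\Lambda$ of a supergradient of $\Gamma$. Writing $\Gamma(\lambda) = \max_{u,v\ge 0} F(u,v,\lambda)$ from part~(1) and applying an envelope (Danskin-type) argument at the optimal $(\overline{u},\overline{v})$, I would differentiate $F$ in $\lambda$ under the integral. Since $\phi_\lambda(x) = \min_i(d(x,x_i)-\lambda_i)$ is affine in $\lambda$ with $\partial\phi_\lambda/\partial\lambda_i = -1$ precisely on the cell $R_i(\lambda)$ (the cell boundaries being Lebesgue-null), this gives $\partial\Gamma/\partial\lambda_i = -\overline{v}\int_{R_i(\lambda)} \overline{q}_\lambda\,dx$. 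Because the cells tile $K$ and $\overline{q}_\lambda$ integrates to one, these components sum to $-\overline{v}$; projecting onto $\Lambda=\{\sum\lambda_i=0\}$ subtracts the mean $-\overline{v}/N$ and produces exactly $\overline{v}\,g$ with $g_i = \tfrac1N - \int_{R_i(\lambda)}\overline{q}_\lambda\,dx$. As $\overline{v}\ge 0$, the halfspace $\{\langle g,\lambda'-\lambda\rangle\ge 0\}$ coincides with the ascent halfspace of the projected supergradient, so by the quasi-concavity of part~(3) every point of strictly larger $\Gamma$-value, in particular every maximizer, lies in it.

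For part~(3) I would separate quasi-concavity from the identification of the maximizer. For quasi-concavity I avoid $\Gamma$ directly and analyze its strict superlevel sets: $\Gamma(\lambda) > c$ holds exactly when the convex, weakly compact entropy sublevel set $\{q : H(q,p)\le c\}$ is disjoint from $\Omega(\lambda)$, that is, when $M_c(\lambda) := \min\{\int_K\phi_\lambda q\,dx : H(q,p)\le c\} > \delta$. For each fixed $q$ the map $\lambda\mapsto \int_K \min_i(d(x,x_i)-\lambda_i)\,q\,dx$ is concave in $\lambda$ (an integral of pointwise minima of affine functions), so $M_c$, an infimum of concave functions, is concave; hence $\{\Gamma > c\} = \{M_c > \delta\}$ is convex and $\Gamma$ is quasi-concave. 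To identify the maximizer I would record the bound $\Gamma(\lambda)\le \min_{q\in B_\delta(\mu)} H(q,p)$, which follows because $B_\delta(\mu) = \bigcap_\lambda \Omega(\lambda)$ by Theorem~\ref{thm: justification} and shrinking the feasible set only raises the minimum; evaluating at the $\lambda^*$ of Theorem~\ref{thm: main}, the part-(1) minimizer over $\Omega(\lambda^*)$ is precisely $q^*$, so $\Gamma(\lambda^*) = H(q^*,p) = \min_{B_\delta} H$ attains the bound and $\lambda^*$ is a global maximizer. For uniqueness, if $\lambda'$ is another maximizer then $q^*$ and the minimizer $\overline{q}_{\lambda'}$ both achieve $\min_{B_\delta}H$ over the convex set $\Omega(\lambda')\supseteq B_\delta$, so strict convexity of $H(\cdot,p)$ forces $\overline{q}_{\lambda'} = q^*$, and comparing the exponential representations together with the gauge-fixing $\sum\lambda_i=0$ yields $\lambda'=\lambda^*$. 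I expect this last step to be the main obstacle: it genuinely needs the non-degenerate regime $\overline{v}>0$ (when $\delta$ is so large that $p\in B_\delta$, the density no longer depends on $\lambda$ and $\Gamma$ is constant), and recovering $\lambda$ from the piecewise-affine profile $\phi_\lambda$ requires the weighted Voronoi partition to be non-degenerate, both of which must be drawn from the hypotheses of Theorem~\ref{thm: main}.
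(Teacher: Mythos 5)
Your part (1) follows the paper's proof essentially verbatim (Lagrangian with multipliers $u$, $v\geq 0$, pointwise minimization giving the exponential form, substitution, Slater point), and your identification of $\lambda^*$ as a maximizer in part (3) — via $B_{\delta}(\mu)\subseteq\Omega(\lambda)$ for all $\lambda$ plus Theorem~\ref{thm: main} — is also the paper's argument. Your quasi-concavity proof is genuinely different: the paper uses concavity of $\lambda\mapsto\phi_{\lambda}(x)$ pointwise to show that any minimizer for $\Gamma(\alpha\lambda+(1-\alpha)\lambda')$ already lies in $\Omega(\lambda)$ or in $\Omega(\lambda')$, giving $\Gamma(\alpha\lambda+(1-\alpha)\lambda')\geq\min(\Gamma(\lambda),\Gamma(\lambda'))$ directly, whereas you show the strict superlevel sets $\{\Gamma>c\}=\{M_c>\delta\}$ are convex because $M_c$, an infimum of concave functions of $\lambda$, is concave. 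Your route is sound but buys this at the cost of the weak-compactness/attainment argument (Dunford--Pettis for entropy sublevel sets) that the paper never needs; the paper's argument is more elementary.

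The genuine gap is in your part (2), in the case $\overline{v}=0$. Your argument computes $\nabla\Gamma(\lambda)=\overline{v}\,g$ by a Danskin envelope and then invokes the first-order characterization of quasi-concavity; when $\overline{v}=0$ the gradient vanishes while $g$ does not (there $\overline{q}_{\lambda}=p$, so $g_i=\tfrac1N-\int_{R_i(\lambda)}p\,dx$, generically nonzero), so the sentence ``the halfspace $\{\langle g,\lambda'-\lambda\rangle\geq 0\}$ coincides with the ascent halfspace of the projected supergradient'' is false and your argument places no constraint at all on the maximizers. This case is not exotic: it occurs at every iterate $\lambda$ with $\int_K\phi_{\lambda}(x)p(x)\,dx<\delta$, which happens even when $\mathcal{W}(p,\mu)>\delta$, i.e.\ even when the overall problem is non-degenerate; since the cutting-plane algorithm must emit valid cuts at such iterates, the gap is real. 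The paper's argument avoids differentiating $\Gamma$ altogether: by Lemma~\ref{lem: supergradient}, $g$ is the ($\Lambda$-projected) gradient of the \emph{concave} function $\Psi_{\overline{q}_{\lambda}}(\lambda'):=\int_K\phi_{\lambda'}(x)\overline{q}_{\lambda}(x)\,dx$, so for any $\lambda'$ with $\langle g,\lambda'-\lambda\rangle\leq 0$ one gets $\Psi_{\overline{q}_{\lambda}}(\lambda')\leq\Psi_{\overline{q}_{\lambda}}(\lambda)\leq\delta$; hence $\overline{q}_{\lambda}$ is feasible for $\Omega(\lambda')$ and $\Gamma(\lambda')\leq H(\overline{q}_{\lambda},p)=\Gamma(\lambda)$. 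This works uniformly in $\overline{v}$, needs neither Danskin nor part (3), and is the step you should adopt.

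On the uniqueness endgame that you correctly flag as the main obstacle: rather than trying to invert $\lambda\mapsto\phi_{\lambda}$ from the exponential formula (which is delicate), close it with tools already in the paper. Once $\overline{q}_{\lambda'}=q^*$ for a maximizer $\lambda'$, complementary slackness in the dual (with $\overline{v}'>0$) gives $\int_K\phi_{\lambda'}(x)q^*(x)\,dx=\delta$, while $q^*\in B_{\delta}(\mu)$ gives $\sup_{\lambda}\Psi_{q^*}(\lambda)=\mathcal{W}(q^*,\mu)\leq\delta$; thus $\lambda'$ maximizes $\Psi_{q^*}$, and Lemma~\ref{lem: supergradient} part (4) (applicable because $q^*>0$ a.e.) says this maximizer is unique, so $\lambda'=\lambda^*$. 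You are right, and it is worth saying explicitly, that the regime $\mathcal{W}(p,\mu)\leq\delta$ must be excluded: there $p\in\Omega(\lambda)$ for every $\lambda$, so $\Gamma\equiv 0$ and uniqueness of the maximizer genuinely fails — a point the paper passes over in silence.
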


Theorem~\ref{thm: main2} part $(3)$ explains how to construct separators for $\lambda^*$ and thus can be used to construct a sequence of polyhedra of diminishing volume converging to $\lambda^*$ (see Section~\ref{sec: cutting-plane} for details).  

In Section~\ref{Sec: Applications} we discuss several computational experiments carried out with our algorithm. Finally, in Section~\ref{Sec: Applications} we also suggest a practical application of these ideas. We propose computing minimum entropy distributions as a mechanism to mitigate bias in machine learning algorithms (as defined by social scientists). 

{\bf Acknowledgments.}  M Velasco was partially supported by ECOSNord Colciencias grant Problemas de momentos en control y optimizaci\'on (C\'odigo 62910, Convocatoria: 806-2018) and 
by proyecto INV-2018-50-1392 from Facultad de Ciencias, Universidad de los Andes. L.F Vargas is partially supported by the European Union's Framework Programme for Research and Innovation Horizon 2020 under the Marie Sklodowska-Curie Actions Grant Agreement No. 813211  (POEMA). We  wish to thank Fabrice Gamboa, Adolfo Quiroz and \'Alvaro Riascos for many stimulating discussions during the completion of this work.

\section{Preliminaries on cross-entropy and statistical inference on Wasserstein balls}
\label{Sec: Stats}
Kullback's principle of minimum cross-entropy~\cite{Kullback} gives a general method of inference about an unknown probability density $q$ when we are given a prior estimate $p$ of $q$ and inequality constraints on the expected values (moments) of a collection of functions under the unknown distribution $q$. The principle states that one should choose $q$ to be the probability density  satisfying the moment constraints for which the cross entropy $H(q,p)$ is minimized. 

Recall that the cross-entropy (also known as Kullback-Liebler divergence, $I$-divergence or information gain from $p$ to $q$) of two probability densities $q$ and $p$ on $K$ is defined as
\[H(q,p):=\int_K \log\left( \frac{q(x)}{p(x)} \right)q(x)dx.\]

There are many justifications for this principle of which we would like to emphasize two. The first one relies on the properties of cross-entropy as an {\it information measure}. In the discrete case, by~\cite{Hobson} the nonnegative number $H(q,p)$ can be interpreted as the smallest amount of information necessary to change the prior $p$ to the posterior $q$. It is shown in~\cite{Johnson} that similar axiomatic properties are satisfied by cross-entropy in the continuous case. Solving problem~(\ref{BasicProblem}) can therefore be thought of as choosing, among all distributions in $B_{\delta}(\mu)$, the one which can be obtained from $p$ by using the smallest possible amount of additional information. 

The second more formal justification comes from the fundamental work of Shore and Johnson~\cite{SJ1},\cite{SJ2} who show that cross-entropy minimization is {\it the only} self-consistent inference method for, given a prior $p$, selecting a posterior density $q$ from a set $I$ of densities on $K$ satisfying a collection of {\it moment inequality constraints}.  More precisely,  if we write $q=p\circ I$ to denote any such selection procedure then one would expect that any self-consistent method satisfies the following four axioms: 

\begin{enumerate}
\item {\it Uniqueness:} There is a unique solution $q$ for any $I$ and $p$ so $q=p\circ I$ is well defined.
\item {\it Invariance:} The chosen distribution is the same, regardless of the choice of coordinates in which we solve the problem (i.e. if $\Gamma$ is a diffeomorphism of the domain then $\Gamma^*p\circ \Gamma^* I = \Gamma^*( p\circ I)$). 
\item {\it System independence:} Given priors $p_1,p_2$ and constraint sets $I_1,I_2$ about the two systems then the following equality holds:
\[ p_1p_2\circ (I_1\cap I_2)  = (p_1\circ I_1)(p_2\circ I_2).\]
That is it should not matter whether one accounts for independent information about independent systems separately in terms of different densities or together in terms of a joint density.
\item {\it Subset independence:} If $S\subseteq K$ and $q$ is a density on $K$ then denote by $q\ast S$ the conditional density $q(x|x\in S)$.
If $S_1,\dots S_k$ are a partition of $K$, the sets $I_i$ are constrains on moments of the density of $q$ {\it conditioned to $S_i$} and $I=\bigcap_{i=1}^n I_i$ then one would expect that 
\[(p\circ I)\ast S_i = (p\ast S_i)\circ I_i\] 
In words it should not matter whether one treats an independent subset of system states in terms of a separate conditional density or in terms of the full system density.  
\end{enumerate}
The main result of~\cite{SJ1}[Theorem III] is that the principle of minimum cross-entropy is {\it the only inference procedure} satisfying the four axioms above. We are now in a position to prove Theorem~\ref{thm: justification}.

\begin{proof}[Proof of Theorem~\ref{thm: justification}]
This is an immediate consequence of Lemma~\ref{lem: supergradient} part $(1)$ in the following Section and~\cite{SJ1}[Theorem III]. 
\end{proof}

Another motivation for using the Wasserstein distance to define  our ambiguity sets is the fact that there are well-known estimates of the distance between the true distribution $\hat{p}$ and the empirical measure $\mu$ determined by the sample $X_i$ which give statistical consistency guarantees to our approach. More concretely, by~\cite[Theorem 1]{quantization} we know that there exists a constant $\kappa$ depending on $K$ and $\hat{p}$, such that 
\[\mathbb{E}\left[\mathcal{W}(\hat{p},\mu)\right]\leq \kappa N^{-\frac{1}{n}}\] 
from which Markov's inequality implies that 
\[\PP\{\mathcal{W}(\hat{p},\mu)\geq \delta(N)\}\leq \frac{\kappa N^{-\frac{1}{n}}}{\delta(N)}.\]
As a result, if for any $\epsilon>0$ we set $\delta(N)= \frac{\kappa N^{-\frac{1}{n}}}{\epsilon}$ then the true distribution is guaranteed to lie in $B_{\delta(N)}(\mu)$ with probability at least $1-\epsilon$ and $\delta(N)\rightarrow 0$ as $N\rightarrow \infty$.  As a result, the additional information $H(\hat{p},p)$ needed to transform the prior $p$ into the true distribution $\hat{p}$ satisfies the inequality 
\[H(\hat{p},p)\geq H(q^*,p).\]
where $q^*:=q^*(\delta(N))$ is the minimum entropy distribution in $B_{\delta(N)}(\mu)$. Moreover, it can be shown that the right-hand side converges to the true value as $N\rightarrow \infty$ with high probability. The results in this paper allow us to compute the quantity on the right-hand side effectively and thus to {\it learn} an estimate of the information contained in a process from an i.i.d. sample of it. 

\section{Computing optimal transports to discrete measures}

Throughout the rest of the article we assume that $K\subseteq \RR^n$ is a regular compact set (i.e. the closure of a bounded open subset of $\RR^n$) with a fixed metric $d$. We endow $K$ with its Lebesgue measure (denoted by $dx$). We assume that the metric is continuous and that for every $t\in \RR$ and $x_i,x_j\in K$ the following set has measure zero: 
\[\{x\in K: d(x,x_i)-d(x,x_j)=t\}.\]

Moreover, we assume that $\mu=\frac{1}{N}\sum_{i=1}^N \delta_{x_i}$ is the empirical measure of our given sample $x_i\in K$ and that $p$ is a continuous and strictly positive probability density function in $K$. By a density we mean a function $q\in L^1(dx)$ with $q(x)\geq 0$ and $\int_Kq(x)dx=1$ such a density specifies, via integration, a corresponding probability measure (distribution). We will use the letter $q$ to denote both the density and the corresponding distribution and let $\Lambda=\{\lambda\in \RR^N: \sum\lambda_i=0\}$.

For a fixed probability density $q$ on $K$ define, for $\lambda\in \RR^N,x\in K$ the functions 
\[
\begin{array}{ll}
\phi_{\lambda}(x):=\min_{1\leq i\leq N} \left\{d(x,x_i)-\lambda_i\right\} & \Psi(\lambda) = \int_K \phi_{\lambda}(x)q(x)dx.\\
\end{array}
\]

Our first Lemma allows us to construct an optimal transport from $q$ to $\mu$ by solving a finite-dimensional concave maximization problem. It summarizes ideas contained in~\cite{Carlssonetal} whose proof we include for the reader's benefit. The proof also explains how the function $\phi_{\lambda}$ arises naturally in this context.
\begin{lemma}\label{lem: supergradient} The following statements hold:
\begin{enumerate}
\item The Wasserstein distance $\mathcal{W}(q,\mu)$ can be computed as
\[
\mathcal W(q,\mu) = \max\left\{ \Psi(\lambda)\text{ for $\lambda\in \RR^N$ with $\sum \lambda_i=0$ }\right\}.\]
\item For $i=1,\dots, N$ define the regions 
\[R_i(\lambda):=\left\{x\in \RR^n: \forall j\left(d(x,x_i)-\lambda_i \leq d(x,x_j)-\lambda_j\right)\right\}.\]
The function $\Psi(\lambda)$ is concave and moreover
\[\nabla\Psi(\lambda) = \left(\frac{1}{N}-\int_{R_i(\lambda)} q(x)dx\right)_{i=1,\dots, N}.\]
\item An optimal transport between $q$ and $\mu$ is obtained by sending all points in the region $R_i(\lambda^*)$ to the point $x_i$ for any maximizer $\lambda^*$ of $\Psi$ in $\Lambda$.
\item If $q(x)>0$ almost surely in $K$ then there is a unique maximizer $\lambda^*$ of $\Psi$ in $\Lambda$.

\end{enumerate}

\end{lemma}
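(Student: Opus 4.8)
The plan is to treat the four parts as a single, concrete instance of Kantorovich duality made tractable by the discreteness of $\mu$. I would prove part (2) first, since the gradient computation is exactly what certifies the candidate optimal plan in parts (1) and (3), and I would reserve the uniqueness statement (4) for last, as it is the only genuinely delicate point.

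For part (2), observe that for fixed $x$ the map $\lambda\mapsto\phi_\lambda(x)=\min_i(d(x,x_i)-\lambda_i)$ is a minimum of affine functions of $\lambda$, hence concave; integrating against $q\ge 0$ preserves concavity, so $\Psi$ is concave. Since $\phi_\lambda$ is $1$-Lipschitz in $\lambda$ and bounded (as $K$ is compact), dominated convergence justifies differentiating under the integral sign. Wherever the minimizing index is unique, $\partial_{\lambda_i}\phi_\lambda(x)=-\mathbf{1}[x\in R_i(\lambda)]$, and the tie set $\{x: d(x,x_i)-\lambda_i=d(x,x_j)-\lambda_j\}$ is Lebesgue-null by the standing hypothesis; hence $\partial\Psi/\partial\lambda_i=-\int_{R_i(\lambda)}q\,dx$. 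Because the optimization takes place on the affine subspace $\Lambda$, the relevant gradient is the projection of this vector onto $\{v:\sum_i v_i=0\}$, obtained by subtracting the mean $\tfrac1N\sum_j\int_{R_j(\lambda)}q\,dx=\tfrac1N$ from each coordinate, which yields precisely $\nabla\Psi(\lambda)_i=\tfrac1N-\int_{R_i(\lambda)}q\,dx$.

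Next I would prove parts (1) and (3) together as a weak-duality/explicit-plan sandwich. For the lower bound, for any $\pi\in\Pi(q,\mu)$ and any $\lambda\in\Lambda$ the inequality $\phi_\lambda(x)+\lambda_i\le d(x,x_i)$ is immediate from the definition of $\phi_\lambda$, and integrating it against $\pi$ gives $\int d\,d\pi\ge\Psi(\lambda)+\tfrac1N\sum_i\lambda_i=\Psi(\lambda)$, using the marginals of $\pi$ and $\sum\lambda_i=0$; taking the infimum over $\pi$ and the supremum over $\lambda$ yields $\mathcal{W}(q,\mu)\ge\sup_{\Lambda}\Psi$. A maximizer $\lambda^*$ exists because $\Psi$ is continuous and coercive on $\Lambda$ (along any direction to infinity some coordinate $\lambda_m\to+\infty$, forcing $\phi_\lambda\to-\infty$ and hence $\Psi\to-\infty$). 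By part (2) this $\lambda^*$ is a critical point, so $\int_{R_i(\lambda^*)}q\,dx=\tfrac1N$ for every $i$; since the $R_i(\lambda^*)$ partition $K$ up to a null set, sending all $q$-mass in $R_i(\lambda^*)$ to $x_i$ defines an admissible plan $\pi^*\in\Pi(q,\mu)$. On $R_i(\lambda^*)$ one has $d(x,x_i)=\phi_{\lambda^*}(x)+\lambda^*_i$, so the cost of $\pi^*$ equals $\sum_i\int_{R_i(\lambda^*)}(\phi_{\lambda^*}+\lambda^*_i)q\,dx=\Psi(\lambda^*)+\tfrac1N\sum_i\lambda^*_i=\Psi(\lambda^*)$. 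Sandwiching this against the lower bound shows $\mathcal{W}(q,\mu)=\Psi(\lambda^*)=\max_{\Lambda}\Psi$ and that $\pi^*$ is optimal, which is exactly parts (1) and (3).

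The hard part will be part (4). Suppose $\lambda^0\ne\lambda^1$ both maximize $\Psi$ on $\Lambda$; by concavity the entire segment maximizes, so $\int_{R_i(\lambda_t)}q\,dx=\tfrac1N$ for all $i$ and all $t\in[0,1]$, where $\lambda_t=(1-t)\lambda^0+t\lambda^1$ and $w:=\lambda^1-\lambda^0\ne0$ satisfies $\sum w_i=0$. Choosing $i$ with $w_i=\max_k w_k>0$, a direct comparison of the defining inequalities shows $R_i(\lambda_t)$ is nondecreasing in $t$; constancy of its $q$-mass together with $q>0$ a.e. then forces $R_i(\lambda_t)$ to be constant modulo Lebesgue-null sets. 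The crux — the step I expect to be most delicate — is to turn this into a contradiction: picking $j$ with $w_j=\min_k w_k<w_i$, the $i$–$j$ interface $\{x: d(x,x_i)-d(x,x_j)=(\lambda^0_i-\lambda^0_j)+t(w_i-w_j)\}$ moves at the strictly positive rate $w_i-w_j$, so as $t$ ranges over $[0,1]$ it sweeps the slab $\{x:(\lambda^0_i-\lambda^0_j)<d(x,x_i)-d(x,x_j)\le(\lambda^0_i-\lambda^0_j)+(w_i-w_j)\}$; I would use continuity of $d$ and the measure-zero-tie hypothesis to argue that this sweep adds a set of positive $q$-mass to $R_i$, contradicting its constancy. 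Carefully ruling out the degenerate possibility that the swept region is null — which is precisely where $q>0$ a.e. and the regularity of $K$ enter — is the main obstacle; once it is excluded, $w=0$, the maximizer is unique, and it is identified with the $\lambda^*$ of Theorem~\ref{thm: main}.
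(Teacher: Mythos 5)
Your parts (1)--(3) are correct but take a genuinely different route from the paper's. The paper proves part (1) by invoking Kantorovich's strong duality theorem (cited from Barvinok) and specializing the dual potentials to the discrete marginal $\mu$; part (3) is then deduced by checking that the Voronoi plan achieves $\Psi(\lambda^*)$, which by part (1) equals $\mathcal{W}(q,\mu)$. You avoid strong duality altogether: the inequality $\mathcal{W}(q,\mu)\ge\sup_{\Lambda}\Psi$ is elementary (integrate $\phi_\lambda(x)+\lambda_i\le d(x,x_i)$ against any feasible plan), and the sandwich is closed by exhibiting the Voronoi plan, of cost exactly $\Psi(\lambda^*)$, as feasible. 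This buys a self-contained proof (weak duality plus an explicit certificate, no black-box duality theorem), and your coercivity argument gives existence of the maximizer without appeal to the attainment clause of the duality theorem. Your part (2) is essentially identical to the paper's: concavity as an integral of minima of affine functions, differentiation under the integral via the null-tie hypothesis, and projection of the gradient onto $\Lambda$.

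Part (4) contains a genuine gap, and it is exactly the step you flagged: the hypotheses ``$q>0$ a.e.'' and ``$K$ regular compact'' cannot rule out that the swept slab meets $K$ in a null set, because the uniqueness claim is actually false without connectedness of $K$. Concretely, let $K\subset\RR^2$ be the union of the two closed unit disks centered at $a=(0,0)$ and $b=(100,0)$ (a regular compact set; the Euclidean tie sets are hyperbola branches, hence null), let $q$ be uniform on $K$ and $\mu=\frac12(\delta_a+\delta_b)$. For every $\lambda=(s,-s)$ with $|s|\le 49$ one has $R_1(\lambda)=$ first disk and $R_2(\lambda)=$ second disk, each of $q$-mass $\frac12$, so the projected gradient vanishes and, by concavity, every such $\lambda$ is a maximizer: a whole segment of maximizers with $q>0$ everywhere. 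In this example your slab $\{x: \lambda_i^0-\lambda_j^0<d(x,x_i)-d(x,x_j)\le \lambda_i^0-\lambda_j^0+(w_i-w_j)\}$ lies entirely in the gap between the disks, so no refinement of the sweep can succeed without an added hypothesis such as connectedness. Two further remarks. First, even for connected $K$ the pairwise $i$--$j$ interface is the wrong object, since the constraints from the remaining indices $k$ may still bind on the slab; one should instead sweep the union $R_M(\lambda_t)=\bigcup_{k\in M}R_k(\lambda_t)$ over the set $M$ of indices attaining $\max_k w_k$, show $R_M(\lambda_1)\supseteq\{f\le g+\epsilon\}$ where $f=\min_{k\in M}(d(x,x_k)-\lambda_k^0)$, $g=\min_{j\notin M}(d(x,x_j)-\lambda_j^0)$, $\epsilon=\max_k w_k-\max_{j\notin M}w_j>0$, and then use connectedness to force $\{0<f-g<\epsilon\}$ to be nonempty and relatively open, hence of positive Lebesgue measure because $K$ is the closure of an open set, contradicting constancy of the mass $|M|/N$. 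Second, you are in good company: the paper's own proof of (4) (a comparison of $\Psi(\lambda)$ with $\Psi(\lambda^*)$ over the common refinement of the two Voronoi partitions, with strictness coming from $q>0$) silently assumes that distinct maximizers yield diagrams differing on a set of positive measure, which is precisely the same unproved --- and, as the example shows, generally false --- nondegeneracy.
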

\begin{proof}$(1)$ The Kantorovich duality Theorem~\cite[Section IV,14]{Barvinok} asserts that the equality
\[\inf_{\pi\in \Pi(\mu,\nu), (x,y)\sim \pi} \int_{K\times K} d(X,Y)d\pi(x,y) =\max \int_K a(x)d\mu(x) +\int_K b(y)d\nu(y) \]
holds, where the maximum on the right is taken over all pairs of real valued functions $a,b$ on $K$ for which $a(x)+b(y)\leq d(x,y)$ almost surely with respect to any $\pi$. If the measure $\mu$ is discrete and supported on $x_1,\dots, x_N$ then:
\begin{enumerate}
\item The only values of $a(x)$ that matter for the final integral are the numbers $a(x_i)=:\lambda_i$ and the function $a(x)$ enters the objective only through the sum $a(x)+b(y)$. As a result we can add a constant to $a(x)$ and substract it from $b(y)$ without changing the objective function.
\item The inequality $a(x)+b(y)\leq d(x,y)$ becomes $b(y)\leq d(x_i,y)-a(x_i)$ for all $y\in K$ and $i=1,\dots, N$. Given the $\lambda_i$ the best (largest) choice for $b$ is therefore 
\[b(y)=\min_{1\leq i\leq n}\left\{d(x_i,y)-\lambda_i\right\}=\phi_{\lambda}(y).\]
\end{enumerate}
It follows that \[\mathcal{W}(\mu,\nu)= \sup_{\lambda\in \RR^N}\left(\int_K \phi_{\lambda}(y)d\nu(y)+ \sum_{i=1}^N \frac{\lambda_i}{N} \right)= \sup_{\lambda\in \Lambda}\int_K \phi_{\lambda}(y)d\nu(y) =\sup_{\lambda\in \Lambda} \Psi(\lambda) \]
where $\Lambda=\{\lambda\in \RR^N: \sum\lambda_i=0\}$ proving part $(1)$.
$(2)$ The function $\Psi(\lambda)$ is an average of minima of affine linear functions of $\lambda$ and is therefore concave. Differentiating inside the integral sign and using our assumptions on the metric we obtain the vector 
\[g=\left(-\int_{R_i(\lambda)} q(x)dx\right)_{i=1,\dots, N}\] whose orthogonal projection onto the subspace $\Lambda$ is given by the claimed formula proving $(2)$. In particular for any maximizer $\lambda^*$ of $\Psi$ the equality 
\[\int_{R_i(\lambda^*)} q(x)dx=\frac{1}{N}\]
holds for $i=1,\dots, N$.
$(3)$ By the previous equality, the function which sends  every point $x\in R_i(\lambda^*)$ to $x_i$ for $i=1,\dots, N$ defines a transportation plan $\pi$ with cost
\[\int_{K\times K} d(x,y)d\pi(x,y) = \sum_{i=1}^N  \int_{R_i(\lambda^*)} d(x_i,x)q(x)dx =\]
\[=  \sum_{i=1}^N \int_{R_i(\lambda^*)} \left(d(x_i,x)-\lambda^*_i\right)q(x)dx+ \sum_{i=1}^N \frac{\lambda_i^*}{N} =\]
\[=\int_K \phi_{\lambda^*}(x)q(x)dx =\Psi(\lambda^*)=\mathcal{W}(q,\mu) \]
so $\pi$ is an optimal transportation plan between $q$ and $\mu$ as claimed.

$(4)$ Assume that $q(x)>0$ a.s. and suppose $T_1,\dots, T_N$ is any partition of $K$ into $N$ regions. Note that for every $i=1,\dots, N$ we have
\[\int_{R_i(\lambda)} \left(d(x_i,x)-\lambda_i\right) q(x)dx \leq  \sum_{j=1}^N \int_{R_i(\lambda)\cap T_j} (d(x_j,x)-\lambda_j)q(x)dx \] 
because the regions $R_i(\lambda)$ are defined as the set where the functions on the right-hand side achieve the minimum at index $i$. Crucially the inequality is {\it strict} whenever there is some $j\neq i$ such that $R_i(\lambda)\cap T_j$ has positive measure because $q(x)>0$. 
Summing these inequalities over $i$ for $T_j:=R_j(\lambda^*)$ for $j=1,\dots, N$, we conclude that 
\[\Psi(\lambda)= \sum_{i=1}^N\int_{R_i(\lambda)} \left(d(x_i,x)-\lambda_i\right) q(x)dx<\sum_{i=1}^N\sum_{j=1}^N \int_{R_i(\lambda)\cap R_j(\lambda^*)} (d(x,x_j)-\lambda_j)q(x)dx =\]
\[= \sum_{j=1}^N \int_{R_j(\lambda^*)} (d(x,x_j)-\lambda_j)q(x)dx = \int_{K}d(x,x_j)q(x)dx =\Psi(\lambda^*)\]
where the equalities follow from the fact that $\lambda\in \Lambda$ and that $\int_{R_j(\lambda^*)}q(x)dz=\frac{1}{N}$ for every index $j$ as shown in the proof of part $(2)$. 
\end{proof}

Part $(2)$ of the previous Lemma allows us to construct the optimal transport between $q$ and $\mu$ whenever we are able to compute (or at least approximate) the integrals $\int_{R_i(\lambda)} q(x)dx$ of $q$ over weighted Voronoi regions. This computation can be carried out using Montecarlo methods whenever one can efficiently sample a random variable with density $q(x)$. More precisely, choose $\lambda^{(0)}\in \Lambda$ and define iterates
\[\lambda^{(k)}:= \lambda^{(k-1)} +t_k g^{(k-1)}\]
where $g^{(k-1)}$ is the gradient of $\Psi$ at $\lambda^{(k-1)}$ computed in Lemma~\ref{lem: supergradient} part $(2)$ and $t_{k}\geq 0$ is a step-size By the concavity of $\Psi$ the sequence $\lambda^{(k)}$ converges to the maximizer $\lambda^{*}$ of $\Psi$ whenever the sequence $(t_k)_k$ of step sizes is chosen to be square summable but not summable. 

\begin{remark} As in the introduction, the regions $R_i(\lambda)$ are called a {\it Voronoi diagram additively weighted by $\lambda$}. The reason for this nomenclature is that in the Euclidean case (i.e. when $d(x,y)=\|x-y\|_2$) and $\lambda=0$ the corresponding regions $R_i(\lambda)$ are the usual Voronoi cells determined by the points $x_i$. Like Voronoi diagrams, weighted Voronoi diagrams are very beautiful combinatorial structures (see Figure~\ref{Fig: Voronoi}.) 
\end{remark}

\begin{figure}[h]
    \centering
    \subfloat[][\centering Weighted Voronoi Diagram]{\includegraphics[width=5.8cm]{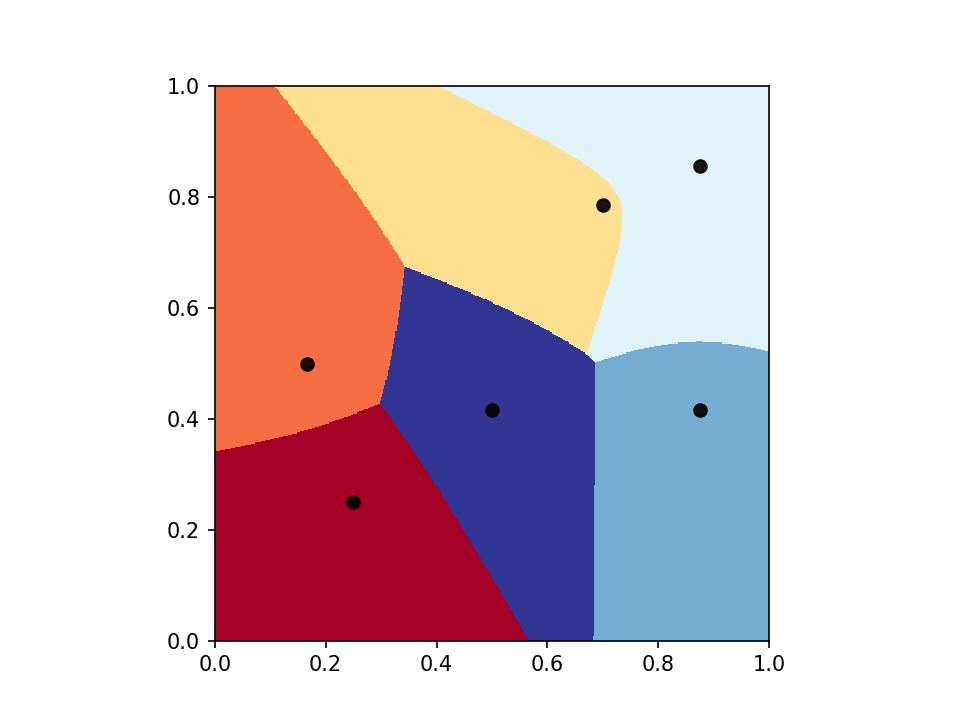} 
    }
    \qquad
    \subfloat[][\centering (Unweighted) Voronoi diagram]{\includegraphics[width=5.8cm]{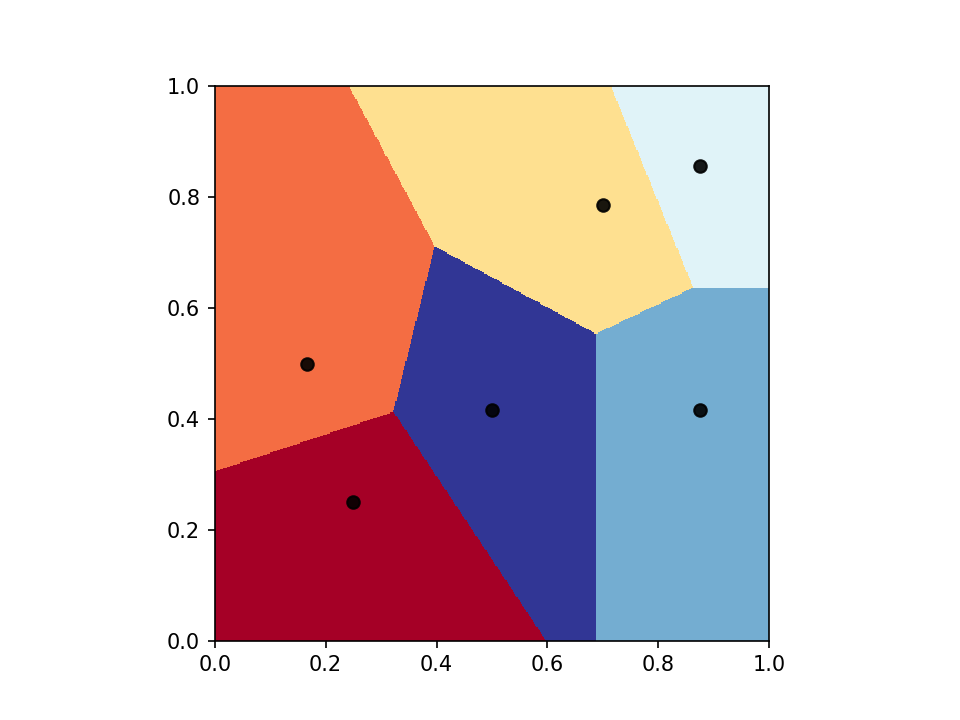} }
    \caption{Figure $(A)$ describes an optimal transport between the uniform distribution on $[0,1]\times[0,1]$ and the given empirical measure by sending each color class to the unique black dot in its interior. It was computed via Lemma~\ref{lem: supergradient}.}
    \label{Fig: Voronoi}%
\end{figure}

\section{Computing minimum cross-entropy distributions in Wasserstein balls centered at discrete distributions.}
In this Section we develop an algorithm for finding a minimizer of the optimization problem: 
\begin{equation}
\label{BasicProblem2}
\min_q\left\{ H(q,p): \mathcal{W}(q,\mu)\leq \delta \right\}
\end{equation}
In view of Lemma~\ref{lem: supergradient} part $(1)$ this problem can be reformulated as

\begin{equation}
\label{BasicProblem2v2}
\min_q\left\{ H(q,p): \int_K \phi_{\lambda}(x) q(x)dx \leq \delta \text{ for every $\lambda\in \RR^N$ with $\sum \lambda_i=0$} \right\}
\end{equation}
where $q$ runs over the nonnegative densities $q\in L^1(dx)$ which integrate to one.
A key step for solving this problem will be the study of its Lagrangian dual. We begin by reminding the reader of a version of Lagrange duality suitable for our application to infinite-dimensional linear spaces.

\subsection{Lagrange duality} \label{Sec: LagrangianDual} Let $V$ be a vector space and let $Z$ be a normed vector space. We denote the continuous dual of $Z$ via $Z^*$ and denote the usual pairing between these spaces with $\langle z,z^*\rangle$. We endow $Z$ with a closed cone $P$ of positive elements and $Z^*$ with a corresponding dual cone $P^*:=\{z^*\in Z^*: \forall z\in P\left(\langle z,z^*\rangle \geq 0\right)\}$. The positive cone $P$ allows us to define a partial order on $Z$ via $z_1\preceq z_2$ whenever $z_2-z_1\in P$. Recall that a function $G:V\rightarrow Z$ is called convex if the following inequality holds in the partial order defined by $P$
\[\forall v,v'\in V, \lambda\in [0,1]\left(G(\lambda v+(1-\lambda)v')\preceq \lambda G(v)+ (1-\lambda)G(v')\right).\] 

We fix a convex function $G: V\rightarrow Z$, a convex set $\Omega\subseteq V$, a finite-dimensional vector space $Y$ and an affine linear map $H:V\rightarrow Y$. The following Theorem is known as (strong) Lagrange duality:

\begin{theorem}\cite[Theorem 1 and Problem 7, Section 8.6]{Luenberger} Let $\phi:\Omega\rightarrow \RR$ be a convex real-valued function and define  \[\alpha:=\inf\{\phi(v):v\in \Omega, G(v)\preceq 0, H(v)=0\}\]
If $0$ is an interior point of $H(\Omega)$, $\alpha$ is finite and there exists $v\in V$ satisfying $G(v)\prec 0$ and $H(v)=0$ then
\[\alpha = \sup_{z^*\in P^*,w^*\in Y^*}\left(\inf_{v\in \Omega} \phi(v) + \langle G(v),z^*\rangle+\langle H(v),w^*\rangle\right)\]
and the supremum in the right-hand side is achieved by some $z_0^*\in P^*, w_0^*\in Y^*$. If moreover the left-hand side is achieved by some $v_0\in \Omega$ then:
\begin{enumerate}
\item The equalities $\langle G(v_0), z_0^*\rangle=0$ and $\langle H(v_0), w_0^*\rangle=0$ hold and
\item The point $v_0$ is a minimizer of $\inf_{v\in \Omega}\left(\phi(v) + \langle G(v),z_0^*\rangle+\langle H(v),w_0^*\rangle\right)$.
\end{enumerate}
\end{theorem}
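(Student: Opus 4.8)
The plan is to establish the identity by the standard separating-hyperplane (Eidelheit) argument in the product space $W := \RR \times Z \times Y$, which is normed since $Z$ is normed and $\RR, Y$ are finite-dimensional, and to treat weak and strong duality separately. First I would dispose of \emph{weak duality}: for any $z^* \in P^*$, $w^* \in Y^*$ and any feasible $v$ (meaning $v \in \Omega$, $G(v) \preceq 0$, $H(v) = 0$) one has $\langle G(v), z^*\rangle \le 0$ since $-G(v) \in P$, while $\langle H(v), w^*\rangle = 0$; hence $\phi(v) \ge \phi(v) + \langle G(v), z^*\rangle + \langle H(v), w^*\rangle \ge \inf_{v \in \Omega}(\cdots)$. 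Taking the infimum over feasible $v$ on the left and the supremum over $(z^*,w^*)$ on the right shows the right-hand side of the claimed identity is $\le \alpha$. Everything nontrivial is therefore the reverse inequality.

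For strong duality I would introduce the set
\[ A := \{ (r, z, y) \in W : \exists\, v \in \Omega \text{ with } \phi(v) \le r,\ G(v) \preceq z,\ H(v) = y \}, \]
and first check that $A$ is convex: given two of its points with witnesses $v_1, v_2$, the point $\theta v_1 + (1-\theta)v_2 \in \Omega$ witnesses the convex combination, using convexity of $\phi$, convexity of $G$ relative to $\preceq$, affineness of $H$, and the fact that $P$ is a convex cone (so convex combinations of the relations $G(v_i) \preceq z_i$ are preserved). The two regularity hypotheses are exactly what force $A$ to have nonempty interior: the Slater point with $G(v) \prec 0$ opens interior in the $Z$-direction, the assumption $0 \in \operatorname{int} H(\Omega)$ opens it in the $Y$-direction, and $r$ may always be increased. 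On the other hand $(\alpha, 0, 0)$ cannot be interior to $A$, since for $\epsilon > 0$ the point $(\alpha - \epsilon, 0, 0)$ lies outside $A$ by definition of $\alpha$ as an infimum.

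With $A$ convex, $\operatorname{int} A \neq \emptyset$ and $(\alpha,0,0) \notin \operatorname{int} A$, the Eidelheit separation theorem produces a nonzero continuous functional on $W$, that is a triple $(s_0, z_0^*, w_0^*) \in \RR \times Z^* \times Y^*$, with
\[ s_0 r + \langle z, z_0^*\rangle + \langle y, w_0^*\rangle \ \ge\ s_0 \alpha \qquad \text{for all } (r,z,y) \in A. \]
Letting $r \to +\infty$ forces $s_0 \ge 0$, and pushing $z$ along $P$ forces $z_0^* \in P^*$. The crucial and most delicate point is to rule out $s_0 = 0$: if $s_0 = 0$ the inequality reads $\langle z, z_0^*\rangle + \langle y, w_0^*\rangle \ge 0$ on $A$, and feeding in the Slater point together with $0 \in \operatorname{int} H(\Omega)$ forces first $w_0^* = 0$ and then $z_0^* = 0$, contradicting nonzeroness. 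Hence $s_0 > 0$, and we normalize $s_0 = 1$.

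Specializing to the witness $(r,z,y) = (\phi(v), G(v), H(v)) \in A$ then gives $\phi(v) + \langle G(v), z_0^*\rangle + \langle H(v), w_0^*\rangle \ge \alpha$ for every $v \in \Omega$, so the dual value is $\ge \alpha$ and is attained at $(z_0^*, w_0^*)$; with weak duality this proves the identity and the attainment claim. Finally, if the primal is attained at $v_0$, then $\phi(v_0) = \alpha$ and $H(v_0) = 0$, while $\langle G(v_0), z_0^*\rangle \le 0$; but the Lagrangian at $v_0$ is $\ge \alpha = \phi(v_0)$, forcing $\langle G(v_0), z_0^*\rangle \ge 0$, hence $=0$, which is complementary slackness $(1)$ and shows simultaneously that the Lagrangian attains its infimum $\alpha$ at $v_0$, giving $(2)$. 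I expect the main obstacle to be the interior/non-degeneracy bookkeeping: verifying that the two regularity hypotheses genuinely yield $\operatorname{int} A \neq \emptyset$, and above all the argument that $s_0 \neq 0$, since this is exactly where the Slater condition and the finite-dimensionality of $Y$ (which makes $0 \in \operatorname{int} H(\Omega)$ meaningful) are used in an essential way.
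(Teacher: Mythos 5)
This theorem is not proved in the paper at all---it is imported verbatim from \cite{Luenberger} as a black box---so your proposal can only be measured against the classical textbook argument, and it \emph{is} essentially that argument, correctly reproduced: weak duality from the pairing signs, then Eidelheit separation of the convex set $A=\{(r,z,y):\exists v\in\Omega,\ \phi(v)\le r,\ G(v)\preceq z,\ H(v)=y\}$ from the point $(\alpha,0,0)$, non-degeneracy of the separating functional via the Slater point and $0\in\operatorname{int}H(\Omega)$, and complementary slackness read off from the separation inequality evaluated at a primal optimizer. Two details would need attention in a full write-up. First, showing $\operatorname{int}A\neq\emptyset$ is where the finite-dimensionality of $Y$ enters essentially: choose finitely many $y_1,\dots,y_m\in H(\Omega)$, with witnesses $u_1,\dots,u_m\in\Omega$, whose convex hull contains a ball around $0$, and consider $(1-t)v_1+t\sum_i\theta_iu_i$ for the Slater point $v_1$ and small $t>0$; an element $e\in\operatorname{int}P$ dominating all $G(u_i)$ keeps the $Z$-coordinate strictly negative, so these witnesses certify an open box inside $A$. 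Second, your non-degeneracy step is stated in the wrong order: when $s_0=0$ you cannot extract $w_0^*=0$ first, because the term $\langle G(v),z_0^*\rangle$ interferes; instead, plugging in the Slater point (which satisfies $H(v_1)=0$) gives $\langle G(v_1),z_0^*\rangle\ge 0$, while $-G(v_1)\in\operatorname{int}P$ pairs strictly positively with every nonzero element of $P^*$, forcing $z_0^*=0$, and only then does $0\in\operatorname{int}H(\Omega)$ force $w_0^*=0$. Neither point is a gap in the strategy---you flagged the first yourself---but they are exactly where the two regularity hypotheses are consumed.
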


The following example shows that Problem~(\ref{BasicProblem2}) above can be naturally formulated in this setting. As we will show in the next Section the assumptions for Lagrange duality are satisfied, allowing us to rewrite our original problem in a manner amenable to computation.

\begin{example}\label{exm} Let $V=L^1(dx)$ be the space of Lebesgue integrable real valued functions on $K$ and let $\Omega\subseteq V$ be the closed convex cone of a.e. nonnegative functions. Let $\Lambda:=\{\lambda\in \RR^N: \sum \lambda_i=0\}$ and let $Z$ be the space of continuous real-valued functions on $\Lambda$ with the supremum norm endowed with the closed cone $P\subseteq Z$ of functions that are nonnegative on $\Lambda$. By the Riesz representation Theorem the cone $P^*$ is the cone of (unsigned) Borel-measures on $\Lambda$. Let $G:\Omega\rightarrow Z$ be the map which sends $q\in \Omega$ to the continuous function 
\[G(q)(\lambda):=\int_K\phi_{\lambda}(x)q(x)dx-\delta=\Psi(\lambda)-\delta\] and note that $G$ is affine-linear and therefore convex. Let $Y=\RR$ and define the affine-linear function $H:V\rightarrow Y$ via $H(q)=\int_Kqdx-1$.
Define $\phi: \Omega\rightarrow \RR$ via $\phi(q)=\int_Kq(x)\log\frac{q(x)}{p(x)}dx$ and note that $\phi$ is well-defined because $p(x)$ is strictly positive on $K$. Note that $G(q)\preceq 0$ if and only if $\Psi(\lambda)\leq \delta$ for every $\lambda\in \Lambda$. It follows from Lemma~\ref{lem: supergradient} part $(1)$ that problem~(\ref{BasicProblem2}) is equivalent to finding
\[\alpha = \inf_{q\in \Omega,G(q)\preceq 0, H(q)=0} \phi(q)\]
\end{example}

We are now ready to prove the main result of this Section,
\begin{proof}[proof of Theorem~\ref{thm: main}]
We will use the notation from Example~\ref{exm}. We wish to apply Lagrange duality and thus begin by verifying the hypotheses of Theorem~\ref{Sec: LagrangianDual}. For any $q\in L^1(dx)$ the inequality $H(q,p)\geq 0$ holds and therefore $\alpha>-\infty$. If $\hat{q}\in \Omega$ is a probability density function with $W(\hat{q},\mu)=\frac{\delta}{2}$ then $G(\hat{q})\prec 0$ and $H(q)=0$. Moreover there exists $\epsilon>0$ such that $\alpha\hat{q}\in \Omega$ for $|\alpha|\leq \epsilon$ and in particular the set $H(\Omega)$ contains an interval around $0$. By Theorem~\ref{Sec: LagrangianDual} we conclude that strong duality holds, that is:
\[\inf_{q\in\Omega: G(q)\preceq 0,H(q)=0} H(q,p) = \sup_{z^*\in P^*,u^*\in \RR}\left(\inf_{q\in \Omega} H(q,p) + \langle G(q),z^*\rangle+\langle H(q),u^*\rangle\right).\]

Moreover, the functions $G$ and $H$ are continuous on $V$ and therefore 
\[\left\{q\in \Omega: G(q)\preceq 0,H(q)=0\right\}=\Omega\cap G^{-1}(P)\cap H^{-1}(\{0\})\]
is a closed set in $L^1(dx)$. By~\cite[Theorem 2.1]{Csiszar} it follows that there exists a minimizer $q^*$ of the cross-entropy on this set and moreover that this minimizer is unique because the set is convex and $H(q,p)$ is strictly convex in $q$. Furthermore, since $K$ is a regular compact set, we know that for every open set $A\subseteq K$ there exists a Lebesgue density in $B_{\delta}(\mu)$ which assigns positive measure to $A$. We conclude by~\cite[Remark 2.14]{Csiszar} that the locus of points where $q^*(x)=0$ has Lebesgue measure zero. 

The existence of the a.s. positive minimizer $q^*$ allows us to apply the second part of Theorem~\ref{Sec: LagrangianDual} and obtain the following conclusions:
\begin{enumerate}
\item If $z^*\in P^*$ and $u^*\in \RR^*$ are maximizers of the dual problem above then:
\[\langle G(q^*),z^*\rangle = \int_{\Lambda} G(q^*)(\lambda)dz^*(\lambda)=0\] 
Since $G(q^*)(\lambda)$ is a continuous function and $q^*(x)>0$ almost surely,  Lemma~\ref{lem: supergradient} part $(4)$ implies that the function $G(q^*)$ has value zero at a unique $\lambda^*\in \RR^N$. We conclude that the measure $z^*$ must be a nonnegative real multiple of a Dirac delta measure centered at $\lambda^*$ (i.e. that $z^*=\overline{v}\delta_{\lambda^*}$  for some real number $\overline{v}\geq 0$). 

\item The optimum $q^*$ is therefore a minimizer of the problem
\[\inf_{q\in \Omega} \left(H(q,p) + \overline{v}G(q)(\lambda^*) + \overline{u}\left(\int_Kq(x)dx-1\right)\right)\]
for some $(\overline{u},\overline{v},\lambda^*)\in \RR^2\times \RR^N$ with $\overline{v}\geq 0$. This optimization problem can be written more explicitly as
\begin{equation}
\label{simple}
-\overline{u}-\overline{v}\delta + \inf_{q\in \Omega}\int_K\left(\log q(x)+\overline{v}\phi_{\lambda^*}(x)+\overline{u}-\log p(x)\right)q(x)dx.
\end{equation}
\end{enumerate}

Crucially, the infimum in~(\ref{simple}) can be solved analytically (for any given $(\overline{u},\overline{v},\lambda^*)$) because, for each fixed value of $x$ the problem of choosing $q(x)$ to minimize the integrand reduces to that of minimizing $z$ in $h(z)=z\log(z)+Az$ where $A=\overline{v}\phi_{\lambda^*}(x)+\overline{u}-\log p(x)$. Since $h$ is strictly convex on $z>0$ its unique minimum is achieved whenever $h'(z)=0$ or equivalently when $z=e^{-(1+A)}$. We conclude that the unique pointwise minimum of the integrand above is given by chosing
\[q^*(x):=p(x)\exp\left[-1-\overline{v}\phi_{\lambda^*}(x) -\overline{u}\right]\] 
as claimed. To finish the proof we will characterize the pair $(\overline{u},\overline{v})$ given $\lambda^*$. 

To this end define the function $\Phi: Z^*\times \RR\rightarrow \RR$ via 
\[\Phi(z^*,u) = \inf_{\phi\in \Omega}\left(H(q,p) + \langle G(q),z^*\rangle+uH(q)\right)\] 
and note that by taking suprema over successively smaller sets we obtain the inequalities
\[\sup_{z^*\in P^*,u\in \RR}\Phi(z^*,u)\geq \sup_{z^*=v\delta_{\lambda^*},v\geq 0, u\in \RR} \Phi(z^*,u) \geq \Phi\left(\overline{v}\delta_{\lambda^*},\overline{u}\right)\]
which we know are in fact equalities by the previous paragraph. It follows that $(\overline{u},\overline{v})$ are maximizers of the middle problem, which more explicitly can be rewritten as

\[
\max_{(u,v)\in \RR^2,v\geq 0}\left(-u-v\delta + \inf_{q\in \Omega}\int_K\left(\log q(x)+v\phi_{\lambda^*}(x)+u-\log p(x)\right)q(x)dx\right).
\]
Analytically solving the interior infimum as before we conclude that the unique minimizer has the form
\[q(x):=p(x)\exp\left[-1-v\phi_{\lambda^*}(x) -u\right]\] 
and replacing this expression in the objective function we conclude that $(\overline{u},\overline{v})$ is a maximizer of the concave maximization problem

\[\max_{(u,v)\in \RR^2, v\geq 0}\left(-u-v\delta-\int_K p(x)e^{-\left(1+v\phi_{\lambda^*}(x)+u\right)}dx\right).
\]

A simple direct calculation shows that the gradient of its objective function and its Hessian at a point $(u,v)$ are given by the vector

\[\left( -1+\int_K q(x)dx, -\delta+\int_K\phi_{\lambda^*}(x)q(x)dx\right)\]
and by the symmetric matrix 
\[\left(
\begin{array}{cc}
-\int_K q(x)dx & -\int_K \phi_{\lambda^*}(x)q(x)dx\\
-\int_K \phi_{\lambda^*}(x)q(x)dx & -\int_K \phi_{\lambda^*}(x)^2q(x)dx\\ 
\end{array}\right)\]
which is negative definite showing that the problem is strictly concave and that the point $(\overline{u},\overline{v})$ is the unique maximizer in the convex feasible region we are considering.

\end{proof}

\subsection{A cutting plane algorithm for finding minimum cross-entropy distributions.}\label{sec: cutting-plane}

Theorem~\ref{thm: main} allows us to find the minimum entropy distribution in $B_\delta(\mu)$, provided we know the special value $\lambda^*\in \Lambda$. In this Section we first characterize $\lambda^*$ as the maximum value of a quasi-concave optimization problem and then provide a cutting plane algorithm for approximating its value to any desired accuracy.

Recall from the introduction that for $\lambda\in \Lambda$ we define
\[\Gamma(\lambda) = \min\{H(q,p): q\in \Omega(\lambda)\}\]
where $\Omega(\lambda)$ is the set of $q$ of continous functions in $K$ which satisfy:
\begin{enumerate}
\item $\forall x\in K\left(q(x)\geq 0\right)$ and $\int_Kq(x)dx=1$
\item $\int_K \phi_{\lambda}(x) q(x)dx\leq \delta$.
\end{enumerate}
It is immediate that the set $\Omega(\lambda)$ is a superset of the feasible set of Problem~(\ref{BasicProblem2}) so $\Gamma(\lambda)$ is a lower bound for the optimum of Problem~(\ref{BasicProblem2}). We will show that this relaxation is exact and that $\lambda^*$ is the unique maximizer of $\Gamma$. This provides us with a strategy for finding $\lambda^*$, namely the maximization of $\Gamma$. We are now ready to prove the main result of this Section,

\begin{proof}[Proof of Theorem~\ref{thm: main2}] 
$(1)$ Arguing as in the proof of Theorem~\ref{thm: main} one shows that, for any $\lambda\in \Lambda$, the strong Lagrange dual of problem of $\Gamma(\lambda)$ is given by 
\[\max_{(u,v)\in \RR^2, v\geq 0}-u-v\delta-\left(\min_{q(x)\geq 0}\int_K q(x)\log\frac{q(x)}{p(x)}+q(x)(u+v\phi_{\lambda}(x))dx\right).\]
For any $u,v$ we can minimize the integrand pointwise by selecting \[q(x):=p(x)\exp\left[-(1+v\phi_{\overline{\lambda}}(x)+u)\right]. \] 
replacing this expression in the objective function we see that if $(u(\lambda),v(\lambda))$ are chosen to be the unique optima of 
\[\max_{(u,v)\in \RR^2, v\geq 0}\left(-u-v\delta-\int_Kp(x)e^{-\left(1+v\phi_{\lambda}(x)+u\right)}dx\right) \]
then a distribution of minimum cross-entropy in $\Omega(\lambda)$ is given by  
\[\overline{q}_{\lambda}(x):=p(x)\exp\left[-(1+v(\lambda)\phi_{\lambda}(x)+u(\lambda))\right]. \] 
Furthermore this distribution is unique since $H(q,p)$ is strictly convex in $q$ and $\Omega(\lambda)$ is convex. $(2)$ By Lemma~\ref{lem: supergradient} we know that the vector $g$ is the gradient of the concave function $\Psi(\lambda)$ at $\overline{\lambda}$. As a result for every $\lambda'$ with $\langle \overline{g}, \lambda'-\overline{\lambda}\rangle\leq 0$ the inequality
\[\Psi(\lambda')\leq \Psi(\overline{\lambda})+ \langle \overline{g}, \lambda'-\overline{\lambda}\rangle\]
holds. We conclude that for all such $\lambda'$ the inclusion $\Omega(\lambda')\subseteq \Omega(\overline{\lambda})$ holds and therefore $\Gamma(\lambda')\leq \Gamma(\overline{\lambda})$. It follows that the opposite inequality $\langle \overline{g}, \lambda'-\overline{\lambda}\rangle\geq 0$ must hold at all maximizers $\lambda'$ of $\Gamma$ so $g$ defines a cutting plane as claimed. $(3)$ For each $x\in K$ the function $\phi_{\lambda}$ is a minimum of affine linear functions of $\lambda$ and therefore satisfies the concavity inequality
\[ \phi_{\alpha\lambda+ (1-\alpha)\lambda'}(x)\geq \alpha \phi_{\lambda}(x)+(1-\alpha)\phi_{\lambda'}(x)\]
it follows that whenever $q\in \Omega(\alpha\lambda+(1-\alpha)\lambda')$
\[\delta\geq \int_K \phi_{\alpha\lambda+ (1-\alpha)\lambda'}(x)q(x)dx \geq \alpha \int_K\phi_{\lambda}(x)q(x)dx+(1-\alpha)\int_K\phi_{\lambda'}(x)q(x)\]
so at least one of the summands in the right-hand side is bounded above by $\delta$. We conclude that if $q$ is a minimizer of the optimization problem $\Gamma(\alpha\lambda+(1-\alpha)\lambda')$ then it belongs to either $\Omega(\lambda)$ or $\Omega(\lambda')$ making the minimum over one of these sets a possibly smaller quantity. As a result the following quasi-concavity inequality holds 
\[\Gamma(\alpha\lambda+(1-\alpha)\lambda')\geq \min \left(\Gamma(\lambda),\Gamma(\lambda')\right).\]
By part $(1)$ and Theorem~\ref{thm: main} we know that $\lambda^*$ is a maximizer of $\Gamma$ and that $q^*:=\overline{q}_{\lambda^*}$ is the unique distribution achieving the minimum cross-entropy. If $\overline{\lambda}$ is any maximizer of $\Gamma$ then $B_{\delta}(\mu)\subseteq \Omega(\overline{\lambda})$. The uniqueness of the cross-entropy minimizer in a convex set imply that $\overline{q}_{\overline{\lambda}}=q^*$ from which we conclude $\overline{\lambda}=\lambda^*$. It follows that $\lambda^*$ is the unique maximizer of $\Gamma$ as claimed.

\end{proof}

The previous Theorem allows us to propose a cutting-plane algorithm for finding the distribution $q^*(\delta)$ of minimum cross-entropy in $B_{\delta}(\mu)$. To do this begin with a polytope $P_0\subseteq \RR^N$ which is guaranteed to contain the maximizers of $\Gamma$ (for instance $\|\lambda\|_{\infty} \leq {\rm diam}(K)$) and at each stage $k\geq 0$ repeat the following steps:
\begin{enumerate}
\item Find the Chebyshev center $\lambda^{(k+1)}$ of $P_k$. This is the center of the largest euclidean ball contained in $P_k$ and can be found efficiently by solving a linear optimization problem as in~\cite[Section 8.5.1]{BV}.
\item Solve the problem appearing in Theorem~\ref{thm: main2} part $(1)$ with $\lambda=\lambda^{(k+1)}$ finding a maximizer $(\overline{u}_{k+1},\overline{v}_{k+1})$ (for instance via gradient descent). Use $(\overline{u}_{k+1},\overline{v}_{k+1})$ to define a density $\overline{q}^{k+1}$ with the formula appearing in Theorem~\ref{thm: main2} part $(2)$. This density is a minimizer of the problem $\Gamma(\lambda^{(k+1)})$.

\item Compute the vector $\overline{g}^{(k+1)}$ as
\[\overline{g}^{k+1}_i:=\frac{1}{N}-\int_{R_i(\lambda^{(k+1)})} \overline{q}^{(k+1)}(x)dx\]
for $i=1,\dots, N$.

\item Define the polytope $P_{k+1}$ as the intersection of $P_k$ and the half-space 

\[\left\{\lambda\in \RR^N: \langle \overline{g}^{(k+1)}, \lambda-\lambda^{(k+1)}\rangle\geq 0\right\}\]  
\end{enumerate}

As $k\rightarrow \infty$ the computed densities $\overline{q}^{k}$ converge to the desired maximizer $q^*(\delta)$. 

\section{Computational examples and applications.}
\label{Sec: Applications}

\subsection{An implementation}

We have implemented the algorithms proposed in this article. They are available as python code, at \url{https://github.com/mauricio-velasco/min-cross-entropy.git}. The algorithm requires solving linear optimization problems and for this it uses the industrial solver GUROBI~\cite{Gurobi}. Full-featured academic licenses for this software (required for running our code) for the purposes of research can be obtained from the vendor. Our implementation can carry out the following tasks (see the file \url{Figures.py}  in the repository for syntax details):
\begin{enumerate}
\item {\bf Compute optimal transports to empirical measures.} Given a probability distribution $p$ (provided as a python function which is able to produce i.i.d. samples from $p$) and an empirical measure $\mu$ (given by the locations of its $N$ data points) computes an optimal transport between $p$ and $\mu$. This optimal transport is encoded by a weight vector $\lambda\in \RR^N$ and is obtained by mapping the weighted voronoi cell $R_i(\lambda)$ to the data point $x_i$. Figure~\ref{Fig: Voronoi} 
shows an optimal transport between the uniform distribution in the square and the empirical measure supported in the black dots. Note that the weighted Voronoi diagram is different from the unweighted voronoi diagram and that in the weighted version the regions have equal probability $1/N$ according to $p$.
\item {\bf Compute the distributions in $B_{\delta}(\mu)$ of minimum cross-entropy with a given $p$.} Given a probability distribution $p$ (specified as a python function which produces i.i.d. samples from $p$), a radius $\delta$ and an empirical measure $\mu$ computes the  distribution in the Wasserstein ball $B_{\delta}(\mu)$ of minimum cross-entropy with $p$. This distribution is specificied by returning $(\lambda^*,\overline{u},\overline{v})$ in the formula for $q_{\delta}^*$ appearing in Theorem~\ref{thm: main}.
Figure~\ref{Fig: MinCrossEntropy} shows the density $q^*_{\delta}$ when $\mu$ is the empirical measure given by the black dots, $p$ is the uniform measure in $[0,1]\times [0,1]$ and various radii $\delta$. 
\end{enumerate} 

\begin{center}
\begin{figure}[h]
    \centering
    \subfloat[][\centering ]{\includegraphics[width=5.2cm]{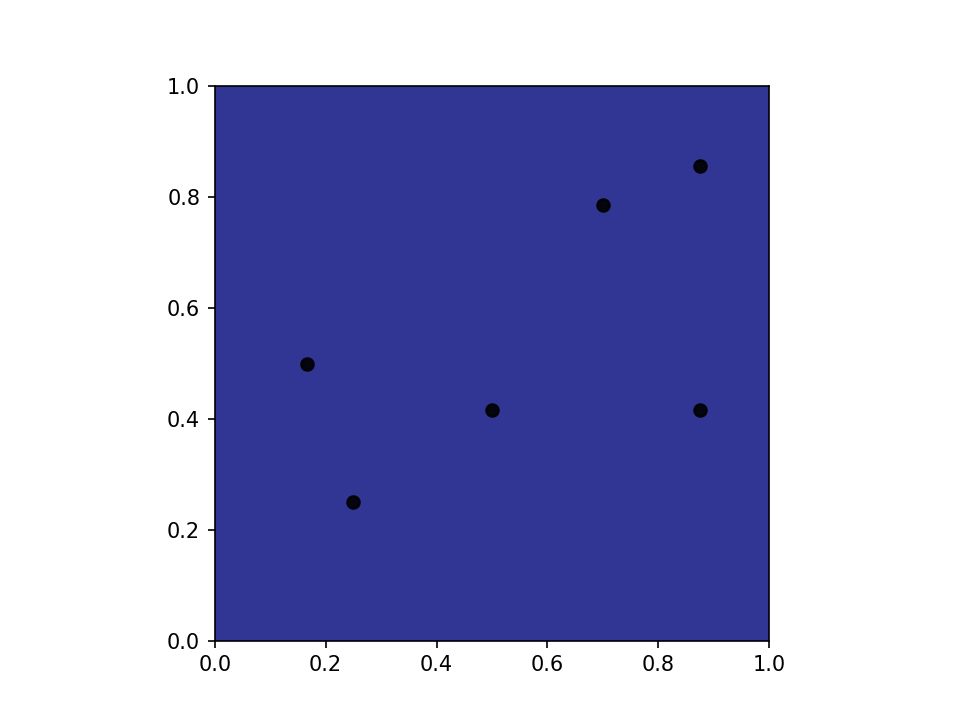}}
    \subfloat[][\centering ]{\includegraphics[width=5.8cm]{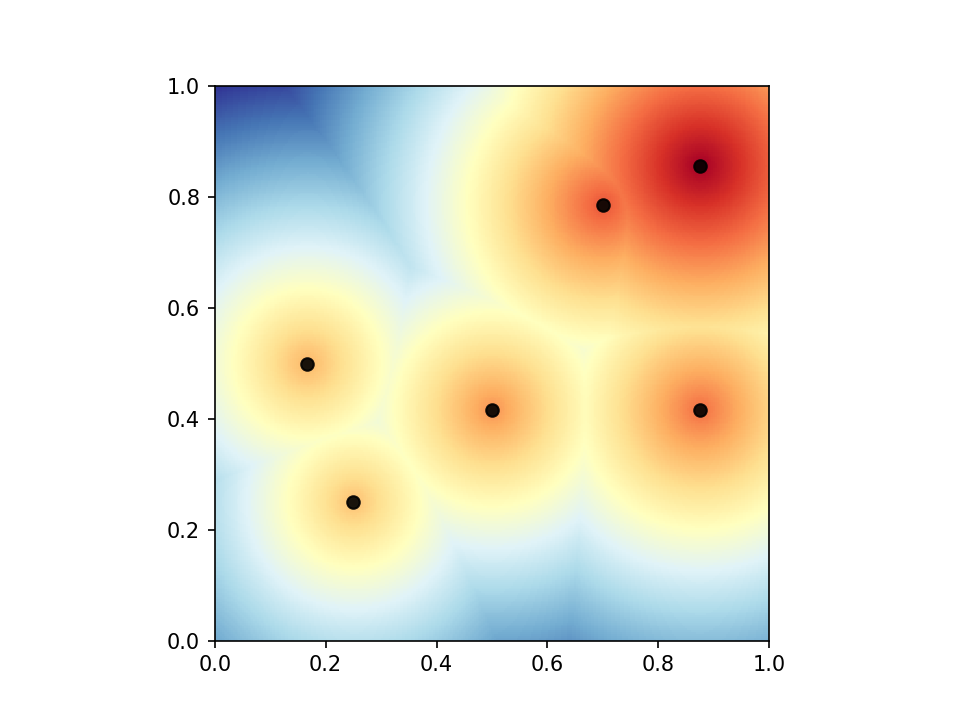}}
    \qquad
    \subfloat[][\centering ]{\includegraphics[width=5.8cm]{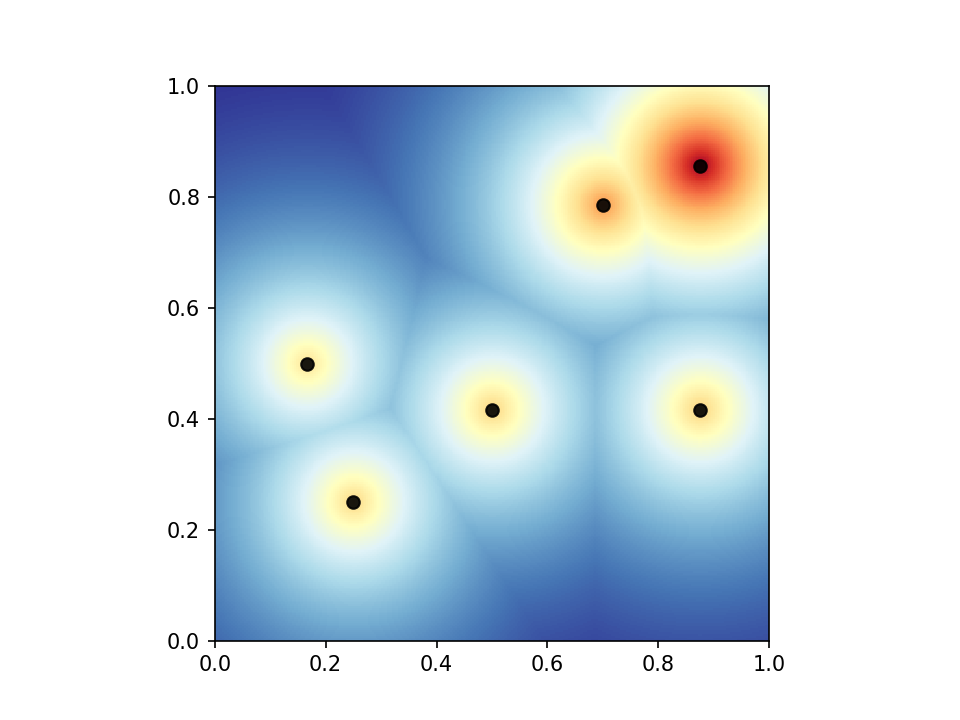}}
    \subfloat[][\centering ]{\includegraphics[width=5.8cm]{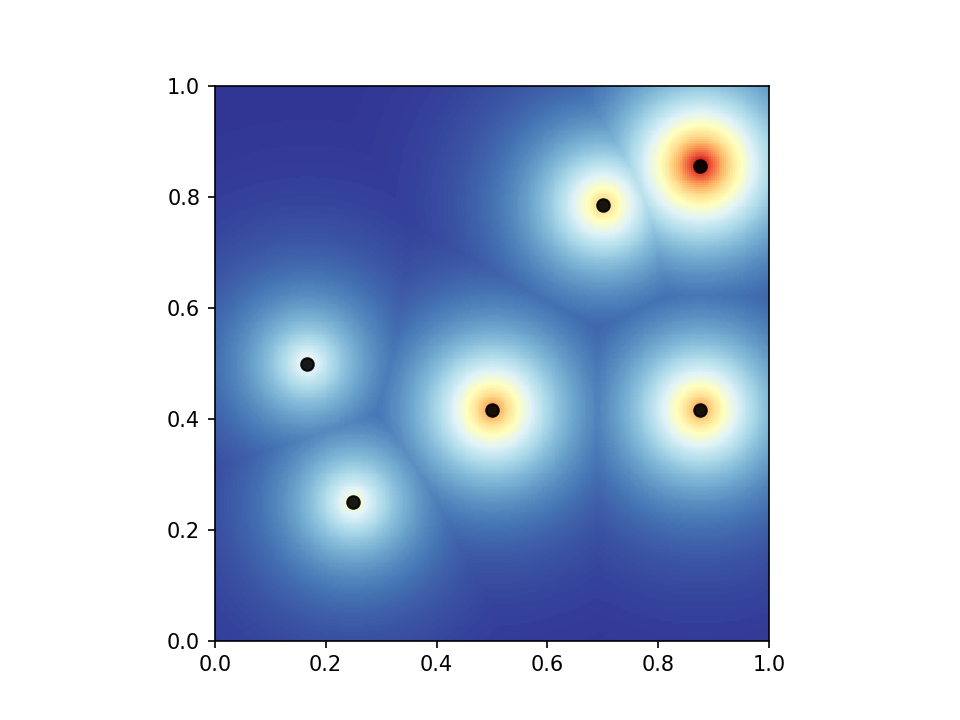}}
    \qquad
    \subfloat[][\centering ]{\includegraphics[width=5.8cm]{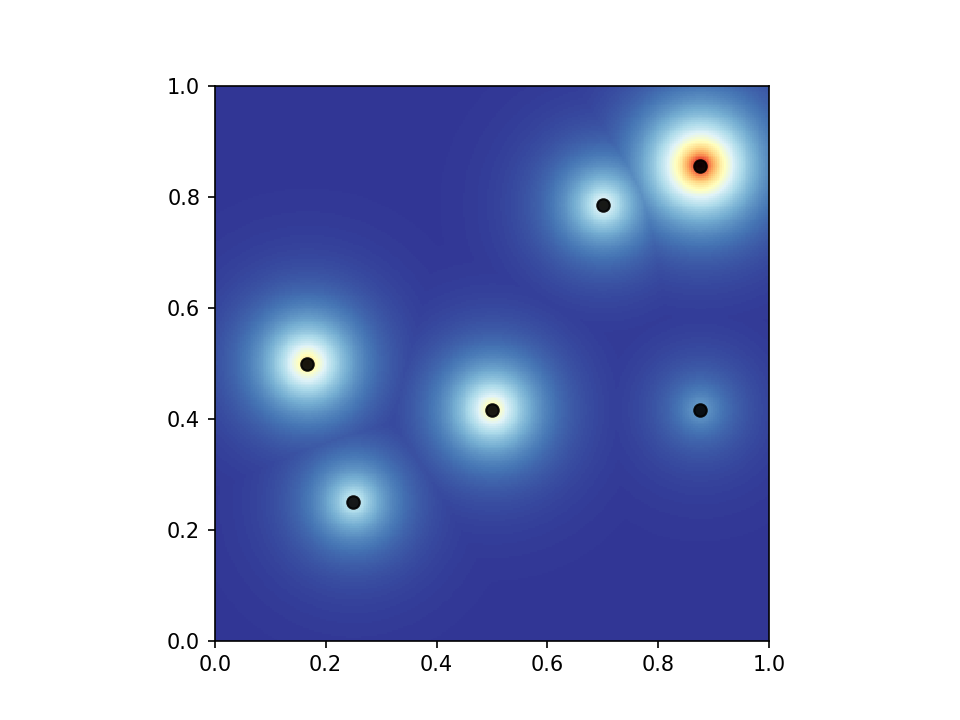}}
    \subfloat[][\centering ]{\includegraphics[width=5.8cm]{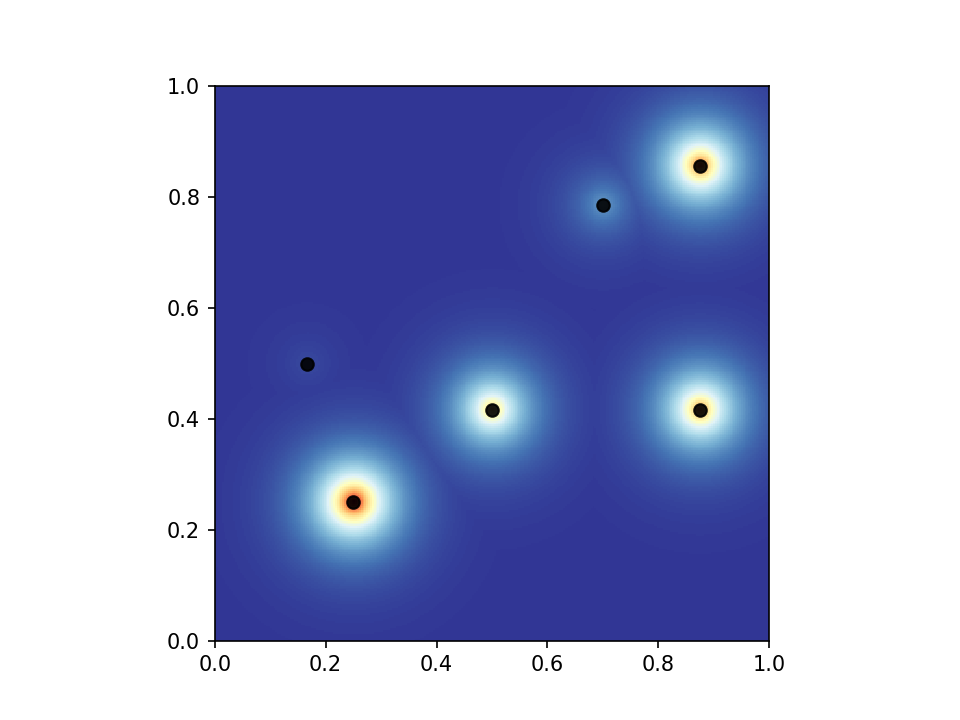}}

\caption{Distributions of minimum cross-entropy $\mathcal{H}(q,p)$ where $p$ is uniform in $[0,1]^2$ and $q\in B_{\delta}(\mu)$ for $\delta\in [0.23, 0.2, 0.15, 0.1, 0.05, 0.02]$.}
    \label{Fig: MinCrossEntropy}%
\end{figure}
\end{center}

\subsection{Bias mitigation via minimum cross-entropy distributions}
The purpose of this Section is to discuss a possible application of our results to the problem of mitigating data-induced biases in machine learning. To make the discussion more concrete we will focus in the case of predictive policing, where this phenomenon is well documented. We begin by briefly reviewing the article~\cite{Significance} which we recommend to the interest reader.

{\bf What is predictive policing?} It is the attempt of using statistical analysis and machine learning algorithms to understand the patterns of criminality to design better policing procedures (more extreme interpretations speak about predicting crime before it happens, in true ''Minority Report" fashion). While this seems like the sort of activity citizens would want their police to be doing, predictive policing software, (which has been built and deployed in many places worldwide within the last few years) and the policing tactics based on it, have raised several serious concerns including  whether the programs unnecessarily target specific (and often disadvantaged) groups more than others. 

More specifically the authors raise in~\cite{Significance} the following rather serious objection: The police data-sets used to train the algorithms are rife with systematic bias. In the authors' words: "Decades of of criminological research, have shown that police records are not a complete census of all criminal offences, nor do they constitute a representative random sample [...].  They measure some complex interaction between criminality, policing strategy, and community-police relations."

To quantitavely assess this bias, the authors study the special case of drug-related crimes in the city of Oakland. They compare the empirical distribution $\mu$ of reported drug arrests and a {\it prior distribution} $p$ constructed by the authors using national health survey data. This prior distribution is a demographically accurate individual-level representation of the real population of the city (in the highest resolution available from the US Census) and estimates the probability of drug use based on sex, household income, age, race, and the geo-coordinates of households using the (well established, well funded, statistically sound) NSDUH survey. The contrast between these two distributions (see~\cite[Figure 1]{Significance}) is rather dramatic and suggests that drug-related police arrests are indeed ratially biased (see~\cite[Figure 1]{Significance}).

Biased training data would affect the outcome of any learning algorithm. However, the relative ease with which fairer or more socially desireable priors can be built (using only publically available data) suggests a possible aproach to limit the effect of these biases. Using the results from this article one may instead learn the distribution:
\[q^*_{\delta} :={\rm argmin}\{H(q,p): q\in B_{\delta}(\mu)\}\]
In words one would like to choose the distribution in $B_{\delta}(\mu)$ which contributes to the prior the smallest possible amount of information. Off course, that still leaves the problem of choosing a good value for the parameter $\delta$. Recall that  $\delta=\infty$ would lead to learning the prior and $\delta=0$ would lead to learning from the police data alone. We believe that this one-parameter choice is an additional {\it desireable feature} of this approach. The choice of $\delta$ should be made by an elected human official (or committeee of experts) who does so taking into account the system of values of her/his society. We also believe that it is an opportunity for a good complementarity between algorithms and humans. It is probably very difficult for a person to estimate the probability distribution of criminality for an entire city while it is much easier to wisely choose one such distribution from a one-parameter family (which the algorithm can probably show to the expert committee in real time with adequate pre-processing by selecting an image as in Figure~\ref{Fig: MinCrossEntropy}.).

\begin{bibdiv}
\begin{biblist}

\bib{EsfahaniKuhn}{article}{
   author={Mohajerin Esfahani, Peyman},
   author={Kuhn, Daniel},
   title={Data-driven distributionally robust optimization using the
   Wasserstein metric: performance guarantees and tractable reformulations},
   journal={Math. Program.},
   volume={171},
   date={2018},
   number={1-2, Ser. A},
   pages={115--166},
   issn={0025-5610},
   review={\MR{3844536}},
   doi={10.1007/s10107-017-1172-1},
}

\bib{Kuhnetal}{article}{
   author={Mohajerin Esfahani, Peyman},
   author={Kuhn, Daniel},
   author={Nguyen, Viet Anh},
   author={Shafieezadeh-Abadeh, Soroosh}
   title ={Wasserstein Distributionally Robust Optimization: Theory and Applications in Machine Learning}
   date={23 Aug 2019},
   journal={ArXiv},
   doi={https://arxiv.org/abs/1908.08729},
}

\bib{Barvinok}{book}{
   author={Barvinok, Alexander},
   title={A course in convexity},
   series={Graduate Studies in Mathematics},
   volume={54},
   publisher={American Mathematical Society, Providence, RI},
   date={2002},
   pages={x+366},
   isbn={0-8218-2968-8},
   review={\MR{1940576}},
   doi={10.1090/gsm/054},
}

\bib{BV}{book}{
   author={Boyd, Stephen},
   author={Vandenberghe, Lieven},
   title={Convex optimization},
   publisher={Cambridge University Press, Cambridge},
   date={2004},
   pages={xiv+716},
   isbn={0-521-83378-7},
   review={\MR{2061575}},
   doi={10.1017/CBO9780511804441},
}

\bib{Carlssonetal}{article}{
   author={Carlsson, John Gunnar},
   author={Behroozi, Mehdi},
   author={Mihic, Kresimir},
   title={Wasserstein distance and the distributionally robust TSP},
   journal={Oper. Res.},
   volume={66},
   date={2018},
   number={6},
   pages={1603--1624},
   issn={0030-364X},
   review={\MR{3898849}},
}

\bib{Csiszar}{article}{
   author={Csisz\'{a}r, I.},
   title={$I$-divergence geometry of probability distributions and
   minimization problems},
   journal={Ann. Probability},
   volume={3},
   date={1975},
   pages={146--158},
   issn={0091-1798},
   review={\MR{365798}},
   doi={10.1214/aop/1176996454},
}

\bib{Kullback}{book}{
   author={Kullback, Solomon},
   title={Information theory and statistics},
   note={Reprint of the second (1968) edition},
   publisher={Dover Publications, Inc., Mineola, NY},
   date={1997},
   pages={xvi+399},
   isbn={0-486-69684-7},
   review={\MR{1461541}},
}

\bib{Significance}{article}{
   author={Lum, Kristian},
   author={Isaac, William},   
   title={To predict and serve?},
   journal={Significance (Royal Statistical Society)},
   date={2016},
   doi={https://doi.org/10.1111/j.1740-9713.2016.00960.x},
}

\bib{Hobson}{article}{
   author={Hobson, Arthur},
   title={A new theorem of information theory},
   journal={J. Statist. Phys.},
   volume={1},
   date={1969},
   pages={383--391},
   issn={0022-4715},
   review={\MR{321609}},
   doi={10.1007/BF01106578},
}

\bib{Johnson}{article}{
   author={Johnson, Rodney W.},
   title={Axiomatic characterization of the directed divergences and their
   linear combinations},
   journal={IEEE Trans. Inform. Theory},
   volume={25},
   date={1979},
   number={6},
   pages={709--716},
   issn={0018-9448},
   review={\MR{551270}},
   doi={10.1109/TIT.1979.1056113},
}

\bib{quantization}{article}{
   author={Dereich, Steffen},
   author={Scheutzow, Michael},
   author={Schottstedt, Reik},
   title={Constructive quantization: approximation by empirical measures},
   language={English, with English and French summaries},
   journal={Ann. Inst. Henri Poincar\'{e} Probab. Stat.},
   volume={49},
   date={2013},
   number={4},
   pages={1183--1203},
   issn={0246-0203},
   review={\MR{3127919}},
   doi={10.1214/12-AIHP489},
}

\bib{SJ1}{article}{
   author={Shore, John E.},
   author={Johnson, Rodney W.},
   title={Axiomatic derivation of the principle of maximum entropy and the
   principle of minimum cross-entropy},
   journal={IEEE Trans. Inform. Theory},
   volume={26},
   date={1980},
   number={1},
   pages={26--37},
   issn={0018-9448},
   review={\MR{560389}},
   doi={10.1109/TIT.1980.1056144},
}

\bib{SJ2}{article}{
   author={Shore, John E.},
   author={Johnson, Rodney W.},
   title={Properties of cross-entropy minimization},
   journal={IEEE Trans. Inform. Theory},
   volume={27},
   date={1981},
   number={4},
   pages={472--482},
   issn={0018-9448},
   review={\MR{635526}},
   doi={10.1109/TIT.1981.1056373},
}

\bib{Luenberger}{book}{
   author={Luenberger, David G.},
   title={Optimization by vector space methods},
   publisher={John Wiley \& Sons, Inc., New York-London-Sydney},
   date={1969},
   pages={xvii+326},
   review={\MR{0238472}},
}

\bib{Gurobi}{article}{
author = {Gurobi Optimization, LLC},
title = {Gurobi Optimizer Reference Manual}, 
date ={2021},  
eprint={http://www.gurobi.com}
}
\end{biblist}
\end{bibdiv}

\end{document}